\documentclass[11pt,leqno,amscd,amssymb,verbatim, url]{amsart}
\usepackage{amsfonts,amssymb}
\usepackage{amsmath,amscd}
\usepackage{graphicx}
\usepackage{caption}
\usepackage{subcaption}
\oddsidemargin .2in \evensidemargin .2in \textwidth 6.1in
\newtheorem{thm}{Theorem}[section]
\usepackage{amsfonts,latexsym}
\usepackage{enumerate}

\newtheorem{lem}[thm]{Lemma}
\newtheorem{cor}[thm]{Corollary}
\newtheorem{conj}[thm]{Conjecture}
\newtheorem{prop}[thm]{Proposition}

\theoremstyle{definition}

\numberwithin{equation}{thm}
\usepackage{wrapfig}

\newcommand{\N}{{\mathbb N}}

\newcommand{\bC}{\mathbb{C}}

\newcommand{\fS}  {\mathfrak{S}}

\newcommand{\blist}{\begin{list}{\rom{(\roman{enumi})}}{\setlength
{\leftmarg in}{0em} \setlength{\itemindent}{7ex}
\setlength{\labelsep}{2ex}\setlength{\listparindent}{\parindent}
\usecounter{enumi}}}
\newcommand{\elist}{\end{list}}

\newtheorem{ex}[thm]{Example}

% word functions
\newcommand{\inv}  {\text{inv}}

\newcommand{\des}  {\text{des}}
\newcommand{\stat}   {\text{st}}

\title{Permutation Statistics and Multiple Pattern Avoidance}
\author{Wuttisak Trongsiriwat}
%\date{November 08, 2012}

\begin{document}

\begin{abstract}  
For a set of permutation patterns $\Pi$, let $F^\stat_n(\Pi,q)$ be the st-polynomial of permutations avoiding all patterns in $\Pi$. Suppose $312\in\Pi$. For a class of permutation statistics which includes inversion and descent statistics, we give a formula that expresses $F^\text{st}_n(\Pi;q)$ in terms of these st-polynomials where we take some subblocks of the patterns in $\Pi$. Using this formula, we can construct many examples of nontrivial st-Wilf equivalences. In particular, this disproves a conjecture by Dokos, Dwyer, Johnson, Sagan, and Selsor that all $\text{inv}$-Wilf equivalences are trivial.
\end{abstract}

\maketitle

{\large\section{Introduction}}

Let $\fS_n$ be the set of permutations of $[n]:=\{1,2,...,n\}$ and let $\fS=\bigcup_{n\geq 0}\fS_n$, where $\fS_0$ contains only one element $\epsilon$ - the empty permutation. For permutation $\pi,\sigma\in \fS$ we say that the permutation $\sigma$ \emph{contains} $\pi$ if there is a subsequence of $\sigma$ having the same relative order as $\pi$. 
In particular, every permutation contains $\epsilon$, and every permutation except $\epsilon$ contains $1\in\fS_1$.
For consistency, we will use the letter $\sigma$ as a permutation and $\pi$ as a pattern. We say that $\sigma$ {\it avoids} $\pi$ (or $\sigma$ is $\pi$-{\it avoiding}) if $\sigma$ does not contain $\pi$. 
For example, the permutation 46127538 contains 3142 since it contains the subsequence while the permutation 46123578 avoids 3142.
We denote by $\fS_n(\pi)$, where $\pi\in\fS$, the set of permutations $\sigma\in\fS_n$ avoiding $\pi$. More generally we denote by $\fS_n(\Pi)$, where $\Pi\subseteq \fS$, the set of permutations avoiding each pattern $\pi\in\Pi$ simultaneously, i.e. $\fS_n(\Pi) = \bigcap_{\pi\in\Pi} \fS_n(\pi)$. Two sets of patterns $\Pi$ and $\Pi'$ are called {\it Wilf equivalent}, written $\Pi\equiv\Pi'$, if $|\fS_n(\Pi)|=|\fS_n(\Pi')|$ for all integers $n\geq 0$.

Now we define the $q$-analogue of pattern avoidance using permutation statistics. A \emph{permutation statistic} (or sometimes just \emph{statistic}) is a function $\stat:\fS\rightarrow\N$, where $\N$ is the set of nonnegative integers. Given a permutation statistic $\stat$, we define the \emph{st-polynomial} of $\Pi$-avoiding permutations to be
$$ F^{\stat}_n(\Pi)=F^{\stat}_n(\Pi;q) := \sum_{\sigma \in \fS_n(\Pi)} q^{\stat(\sigma)}. $$
We may drop the $q$ if it is clear from the context. The set of patterns $\Pi$ and $\Pi'$ are said to be st-\emph{Wilf equivalent} if $F^{\stat}_n(\Pi;q)= F^{\stat}_n(\Pi';q)$ for all $n\geq 0$.

The study of $q$-analogue of pattern avoidance using permutation statistics and the st-WIlf equivalences began 2002, as initiated by Robertson, Saracino, and Zeilberger \cite{RSZ}, with the emphasis on the number of fixed points. Elizalde subsequently refined results of Robertson et al. by considering the excedance statistic \cite{Eliz} and later extended the study to cases of multiple patterns \cite{Eliz2}. A bijective proof was later given by Elizalde and Pak \cite{ElPa}. Dokos et al. \cite{DDJSS} studied pattern avoidance on the the inversion and major statistics, as remarked by Savage and Sagan in their study of Mahonian pairs \cite{SaSa}.

In this paper, we study multiple pattern avoidance on a class of permutation statistics which includes the inversion and descent statistics. The {\it inversion number} of $\sigma\in\fS_n$ is
$$\inv(\sigma) = \#\{(i,j)\in [n]^2: i<j \text{ and } \sigma(i) > \sigma(j) \}. $$ 
The {\it descent number} of $\sigma\in\fS_n$ is 
$$ \des(\sigma) = \#\{i\in [n-1]: \sigma(i)>\sigma(i+1) \}. $$
For example $\inv(3142) = \#\{(1,2),(1,3),(3,4)\} = 3$ and $\des(3142) = \#\{1,3\} = 2$.  

In [1], Dokos et al. conjectured that there are only essentially trivial inv-Wilf equivalences, obtained by rotations and reflections of permutation matrices. Let us describe these more precisely. The notations used below are mostly taken from \cite{DDJSS}.

For $\sigma\in\fS_n$, we represent it geometrically using the squares $(1,\sigma(1)),(2,\sigma(2)),...,(n,\sigma(n))$ of the $n$-by-$n$ grid, which is coordinated according to the $xy$-plane. This will be referred as the {\it permutation matrix} of $\sigma$. The diagram to the left in the Figure 1 is the permutation matrix of 46127538. In the diagram to the right, the red squares correspond to its subsequence 4173, which is an occurrence of the pattern 3142.
\begin{figure}
\begin{center}
\includegraphics[scale=0.5]{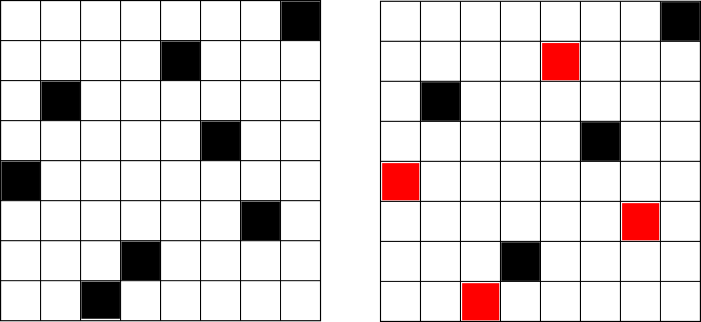}
\caption{The permutation matrix of 46127538 (left) with an occurrence of  3142 colored (right)}
\end{center}
\end{figure}

By representing each $\sigma\in\fS$ using a permutation matrix, we have an action of the dihedral group of square $D_4$ on $\fS$ by the corresponding action on the permutation matrices. We denote the elements of $D_4$ by 
$$D_4=\{R_0,R_{90},R_{180},R_{270}, r_{-1},r_{0},r_{1},r_{\infty}\},$$ 
where $R_\theta$ is the counter-clockwise rotation  by $\theta$ degrees and $r_m$ is the reflection in a line of slope $m$. We will sometimes write $\Pi^t$ for $r_{-1}(\Pi)$.  Note that $R_0,R_{180},r_{-1},$ and $r_1$ preserve the inversion statistic while the others reverse it, i.e.
$$ \inv(f(\sigma)) = \begin{cases} 
\inv(\sigma) &\text{if } f\in\{R_0,R_{180},r_{-1},r_1\},\\ 
{n\choose 2} - \inv(\sigma) &\text{if } f\in\{R_{90},R_{270},r_0,r_{\infty}\}.\end{cases} $$
It follows that $\Pi$ and $f(\Pi)$ are $\inv$-Wilf equivalent for all $\Pi\in\fS$ and $f\in\{R_0,R_{180},r_{-1},r_1\}$. We call these equivalences trivial.
With these notations, the conjecture by Dokos et al. can be stated as the following.\\

\begin{conj}(\cite{DDJSS}, conj. 2.4)
 $\Pi$ and $\Pi'$ are $\inv$-Wilf equivalent iff $\Pi=f(\Pi')$ for some $f\in\{R_0,R_{180},r_{-1},r_1\}$. 
\end{conj}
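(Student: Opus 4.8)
This conjecture turns out to be \emph{false}, and the plan is to disprove it by exhibiting two pattern sets $\Pi$ and $\Pi'$ that are $\inv$-Wilf equivalent yet satisfy $\Pi'\ne f(\Pi)$ for every $f\in\{R_0,R_{180},r_{-1},r_1\}$. The mechanism that makes such pairs discoverable is a block-decomposition formula for $F^{\stat}_n(\Pi;q)$ that holds whenever $312\in\Pi$, so the first task is to set that formula up.

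The starting point is the structure of $312$-avoiders. If $\sigma\in\fS_n(312)$ and $\sigma^{-1}(1)=k$, then every entry in a position $<k$ is smaller than every entry in a position $>k$; hence the entries to the left of the $1$ standardize to some $\alpha\in\fS_{k-1}(312)$, those to its right to some $\beta\in\fS_{n-k}(312)$, and every pair $(\alpha,\beta)$ arises from a unique such $\sigma$. For the statistics treated here this split is additive up to a correction depending only on $k$: for $\inv$ the cross contribution is $k-1$ (the inversions between the left entries and the $1$), and for $\des$ it is $1$ if $k\ge 2$ and $0$ otherwise. The new feature for a general $\Pi$ is how the remaining patterns meet the split: an occurrence of $\pi\in\Pi$ in $\sigma$ lies entirely in $\alpha$, entirely in $\beta$, or straddles the boundary, and in the straddling case every entry used on the left is below every entry used on the right (with the $1$ itself, when used, below both), so such an occurrence exists precisely when $\pi$ can be cut into a left subblock realized in $\alpha$ and a right subblock realized in $\beta$. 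Forbidding all of these, and using inclusion--exclusion over the possible cuts to decouple the conditions on $\alpha$ and $\beta$, converts the weighted count of admissible pairs $(\alpha,\beta)$ into a formula of the shape
$$ F^{\stat}_n(\Pi;q)=\sum_{k=1}^{n} q^{c(k)}\sum_{(\Lambda,P)}(\pm 1)\,F^{\stat}_{k-1}(\Lambda;q)\,F^{\stat}_{n-k}(P;q), $$
where $(\Lambda,P)$ ranges over pairs of pattern sets obtained from $\Pi$ by adjoining left, respectively right, subblocks of the patterns of $\Pi$, and $c(k)$ is the correction exponent.

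With the formula available, producing a counterexample reduces to a finite search: I want $\Pi$ and $\Pi'$, both containing $312$, for which the right-hand side is literally the same expression, which happens as soon as the induced systems $(\Lambda,P)$ coincide and the base cases $F^{\stat}_0,F^{\stat}_1$ agree. Because passing to subblocks discards information, there is genuine room for essentially different $\Pi$ and $\Pi'$ to have identical subblock data. I would then pick the smallest such pair, confirm $F^{\inv}_n(\Pi;q)=F^{\inv}_n(\Pi';q)$ for all $n$ by induction through the formula, and check directly that $\Pi'$ is none of the four sets $R_0(\Pi),R_{180}(\Pi),r_{-1}(\Pi),r_1(\Pi)$ --- which is easy to arrange, for instance by putting into one set a pattern that has no counterpart in the other.

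The main obstacle is proving the formula in the stated generality, above all the straddling analysis: one must pin down exactly which cuts of each $\pi\in\Pi$ yield genuine straddling occurrences, show that barring them is equivalent to the stated subblock conditions, and carry out the inclusion--exclusion cleanly, since a priori those conditions couple $\alpha$ and $\beta$. One also has to delimit the class of statistics for which the decomposition is additive up to a size-dependent correction, with $\inv$ and $\des$ as the guiding cases. Once the formula is in hand, the disproof itself is mostly bookkeeping, although some care is needed to be sure the counterexample pair is not secretly related by one of the four trivial symmetries.
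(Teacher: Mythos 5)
Your plan is correct and follows essentially the same route as the paper: decompose $312$-avoiders at the position of $1$, analyze straddling occurrences via cuts of $\pi$ into left/right subblocks (the paper's Lemma \ref{lem1}), decouple the conditions by inclusion--exclusion (M\"obius inversion) to get the recursion of Theorem \ref{main1}, and then use matching subblock data (Corollary \ref{cor1}, e.g.\ replacing blocks by their transposes) to produce a pair such as $\{312,32415\}$ and $\{312,24315\}$ that is $\inv$-Wilf equivalent but not related by any of the four symmetries. The only caveat is that your write-up is a program rather than a completed argument, but the steps you outline are exactly those the paper carries out.
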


Given permutations $\pi=a_1 a_2...a_k\in\fS_k$ and $\sigma_1,...,\sigma_k\in\fS$, the {\it inflation} $\pi[\sigma_1,...,\sigma_k]$ of $\pi$ by the $\sigma_i$ is the permutation whose permutation matrix is obtained by putting the permutation matrices of $\sigma_i$ in the relative order of $\pi$; for instance, 213[123,1,21]=234165 as illustrated in Figure 2.

\begin{figure}
\begin{center}
\includegraphics[scale=0.5]{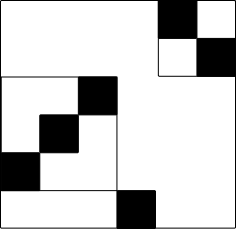}
\caption{The permutation 213[123,1,21]}
\end{center}
\end{figure}

For convenience, we write 
$$\pi_* := 21[\pi,1].$$ 
In other words, $\pi_*$ is the permutation whose permutation matrix is obtained by adding a box to the lower right corner of the permutation matrix of $\pi$.

The next proposition is one of the main results of this paper, which disproves the conjecture above. This is a special case of the corollary of the theorem \ref{main1} in the next section.

\begin{prop}
Let $\pi_1,...,\pi_r,\pi'_1,...,\pi'_r$ be permutations such that $\{312,\pi_i\} \stackrel{\inv}{\equiv} \{312, \pi'_i\}$ for all $i$. Set $\pi=\iota_r[\pi_{1*},...,\pi_{r*}]$ and $\pi'=\iota_r[\pi'_{1*},...,\pi'_{r*}]$. Then $\{312,\pi\}$ and $\{312,\pi'\}$ are also $\inv$-Wilf equivalent, i.e. $F^\inv_n(312,\pi)=F^\inv_n(312,\pi')$ for all $n$.
\end{prop}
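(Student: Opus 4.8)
The plan is to combine the structure theory of $312$-avoiding permutations with a generating-function recursion and thereby show that $F^{\inv}_n(312,\pi)$ depends on the patterns $\pi_1,\dots,\pi_r$ only through the individual polynomials $F^{\inv}_m(312,\pi_i)$; the proposition then follows at once. (This is the mechanism behind Theorem~\ref{main1}, specialized to the statistic $\inv$ and to patterns that are direct sums of blocks of the form $\pi_{i*}$.) First I would recall that a sum-indecomposable $\sigma\in\fS_n(312)$ with $n\ge 2$ has its minimum entry in the last position, so $\sigma=\rho_*$ for a unique $\rho\in\fS_{n-1}(312)$, while conversely $\rho_*$ avoids $312$ whenever $\rho$ does. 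Hence every $\sigma\in\fS(312)$ is uniquely a direct sum $\sigma=\rho_{1*}\oplus\cdots\oplus\rho_{s*}$ with all $\rho_i\in\fS(312)$. Since $\inv$ is additive under $\oplus$ and $\inv(\rho_*)=\inv(\rho)+|\rho|$, writing $D_\Lambda(y):=\sum_{\rho\in\fS(\Lambda)}y^{|\rho|}q^{\inv(\rho)}=\sum_{m\ge 0}F^{\inv}_m(\Lambda;q)\,y^m$ one gets the $q$-Catalan identity $D_{312}(x)=\bigl(1-x\,D_{312}(xq)\bigr)^{-1}$ and, more to the point, $D_{\{312,\tau_*\}}(x)=\bigl(1-x\,D_{\{312,\tau\}}(xq)\bigr)^{-1}$ for every pattern $\tau$, since $\tau_*\le\rho_*$ if and only if $\tau\le\rho$ (check whether the occurrence uses the trailing minimum of $\rho_*$). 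This is already the $r=1$ case of the proposition and will serve as the base of an induction.

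Next I would set up the recursion for general $r$. For $1\le a\le b\le r$ put $\pi^{[a,b]}:=\pi_{a*}\oplus\pi_{(a+1)*}\oplus\cdots\oplus\pi_{b*}$, so $\pi=\pi^{[1,r]}$, and let $C_{[a,b]}(x):=D_{\{312,\pi^{[a,b]}\}}(x)$. Fix $\sigma=\rho_{1*}\oplus\sigma'\in\fS(312)$, where $\sigma'$ is the direct sum of the remaining indecomposable blocks ($\sigma=\epsilon$ allowed). Because each block $\pi_{i*}$ is sum-indecomposable, every occurrence of $\pi^{[a,b]}$ in $\sigma$ realizes a prefix $\pi^{[a,a+j-1]}$ of its $j$ leading blocks inside $\rho_{1*}$ and the complementary suffix $\pi^{[a+j,b]}$ inside $\sigma'$, for some $0\le j\le b-a+1$, and conversely any such split yields an occurrence. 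With the reductions $\tau_*\le\rho_{1*}\iff\tau\le\rho_1$ and $\pi^{[a,c]}\le\rho_{1*}\iff\pi^{[a,c]}\le\rho_1$ for $c\ge a+1$ (the latter because the trailing --- hence last and smallest --- entry of $\rho_{1*}=21[\rho_1,1]$ cannot lie in an occurrence of a direct sum of two or more indecomposable blocks), one gets, with $j^*(\rho)$ the largest $j$ for which $\pi^{[a,a+j-1]}\le\rho_*$,
\[C_{[a,b]}(x)=1+\sum_{j=0}^{b-a}A_{a,j}(x)\,C_{[a+j,b]}(x),\qquad A_{a,j}(x):=\sum_{\substack{\rho\in\fS(312)\\ j^*(\rho)=j}}x^{|\rho|+1}q^{\inv(\rho)+|\rho|}.\]
A short inclusion--exclusion over the conditions defining $j^*(\rho)=j$ then gives $A_{a,0}(x)=x\,D_{\{312,\pi_a\}}(xq)$, $A_{a,1}(x)=x\bigl(C_{[a,a+1]}(xq)-D_{\{312,\pi_a\}}(xq)\bigr)$, and $A_{a,j}(x)=x\bigl(C_{[a,a+j]}(xq)-C_{[a,a+j-1]}(xq)\bigr)$ for $j\ge 2$.

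Finally I would induct on $b-a$. The base case $C_{[a,a]}(x)=\bigl(1-x\,D_{\{312,\pi_a\}}(xq)\bigr)^{-1}$ depends only on $\{F^{\inv}_m(312,\pi_a)\}_{m\ge 0}$. For $b>a$, moving the $j=0$ term to the left rewrites the displayed identity as $C_{[a,b]}(x)\bigl(1-A_{a,0}(x)\bigr)=1+\sum_{j=1}^{b-a}A_{a,j}(x)\,C_{[a+j,b]}(x)$, whose right-hand side involves only $D_{\{312,\pi_a\}}$, the proper-subinterval series $C_{[a,a+k]}(xq)$ for $1\le k<b-a$ and $C_{[a+k,b]}(x)$ for $1\le k\le b-a$, together with one self-reference $C_{[a,b]}(xq)$ (appearing inside $A_{a,b-a}$ and there multiplied by a series divisible by $x$). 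Hence $[x^n]C_{[a,b]}(x)$ is determined by coefficients of lower degree, and, by the inductive hypothesis, $C_{[a,b]}(x)$ is a function of $\{F^{\inv}_m(312,\pi_i):a\le i\le b,\ m\ge 0\}$ alone. Taking $[a,b]=[1,r]$ shows that $F^{\inv}_n(312,\pi)$ is a function of $\{F^{\inv}_m(312,\pi_i):1\le i\le r,\ m\ge 0\}$ alone, so the hypothesis $\{312,\pi_i\}\stackrel{\inv}{\equiv}\{312,\pi'_i\}$ for all $i$ forces $F^{\inv}_n(312,\pi)=F^{\inv}_n(312,\pi')$ for all $n$.

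The step I expect to be the main obstacle is the combinatorial lemma underpinning the recursion: pinning down exactly which prefix of the blocks of $\pi^{[a,b]}$ can occur inside the first indecomposable block $\rho_{1*}$ of $\sigma$, making the two reductions above precise, verifying that $j^*(\rho)$ is well defined, and showing that ``$\sigma$ avoids $\pi^{[a,b]}$'' is equivalent to ``$j^*(\rho_1)\le b-a$ and $\sigma'$ avoids $\pi^{[a+j^*(\rho_1),b]}$''. After that, the inclusion--exclusion evaluation of the $A_{a,j}$ and the uniqueness of the formal power-series solution of the functional equation are routine.
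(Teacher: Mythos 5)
Your argument is correct, and it reaches the proposition by a route that overlaps with the paper at the start but diverges in the counting machinery. The decomposition you use is literally the paper's: $\sigma=213[\sigma_1,1,\sigma_2]$ is exactly your $\rho_{1*}\oplus\sigma'$ (the trailing entry of the first indecomposable block is the global minimum), and your splitting statement --- an occurrence of $\pi^{[a,b]}$ in $\rho_{1*}\oplus\sigma'$ puts a block-prefix in $\rho_{1*}$ and the complementary suffix in $\sigma'$, so that avoidance is equivalent to ``$j^*(\rho_1)\le b-a$ and $\sigma'$ avoids $\pi^{[a+j^*,b]}$'' --- is the maximal-prefix reformulation of Lemma \ref{lem1}, whose proof in the paper also pivots on the largest such index; your reductions ($\tau_*\le\rho_*$ iff $\tau\le\rho$; a multi-block pattern cannot use the trailing minimum) are sound and are exactly what makes $\underline{\pi}(1)=\pi_1$ rather than $\pi_{1*}$ in the paper. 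Where you differ is afterwards: the paper converts the overlapping conditions $C_i$ into the exact alternating formula (*) via the M\"obius function of the intersection poset $\hat L_r$, valid for every statistic satisfying (\textdagger), and then Corollary \ref{cor1} is a short induction on $n$; you instead partition by the exact value of $j^*$, write each class $A_{a,j}$ as a telescoping difference of avoidance series, encode $\inv$ through the substitution $x\mapsto xq$ (using $\inv(\rho_*)=\inv(\rho)+|\rho|$ and additivity over direct sums), and conclude by uniqueness of the formal power-series solution, with a nested induction on the interval length $b-a$ and the coefficient degree (the self-reference $C_{[a,b]}(xq)$ is harmless because it carries a factor of $x$). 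The trade-off: your version avoids the poset/M\"obius step entirely and stays with disjoint decompositions, but it is tailored to $\inv$ --- the $xq$ substitution uses $f(k,n-k)=k$ --- and yields no explicit recursion like (*), which the paper reuses verbatim for all (\textdagger) statistics (e.g.\ $\des$) and extends to several patterns in Theorem \ref{main2}; since the proposition as stated concerns only $\inv$, your argument fully suffices for it.
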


In particular, if we set each $\pi'_i$ to be either $\pi_i$ or $\pi_i^t$, then the conditions $\{312,\pi_i\} \stackrel{\inv}{\equiv} \{312, \pi'_i\}$ are satisfied. By this construction $\Pi'$ is generally not of the form $f(\Pi)$ for any $f\in\{R_0,R_{180},r_{-1},r_1\}$. For example, the pair $\Pi=\{312, 32415\}$ and $\Pi'=\{312, 24315\}$ is a smallest example of a nontrivial $\inv$-Wilf equivalent constructed this way.

%%%%%%%%%%%%%%%%%%%%%%%%%%%%%%%%%%%%%%%%%%%%%%%%%%%%%%%%%%%%%%%%
%%                          SECTION II
%%%%%%%%%%%%%%%%%%%%%%%%%%%%%%%%%%%%%%%%%%%%%%%%%%%%%%%%%%%%%%%%

\vspace{3mm}
{\large\section{Avoiding two patterns}}

In this section, we give a recursive formula for the polynomial $F^\stat_n(\Pi)$ when $\Pi$ consists of 312 and another permutation $\pi$. Then we will present its corollary, which gives a construction of nontrivial st-Wilf equivalences. The idea in the proof of the main theorem is similar to those in \cite{MaVa}.

Suppose $\sigma\in\fS_{n+1}(312)$ with $\sigma(k+1)=1$. Then, for every pair of indexes $(i,j)$ with $i<k+1<j$, we must have $\sigma(i)<\sigma(j)$; otherwise $\sigma(i) \sigma(k+1) \sigma(j)$ forms a pattern 312 in $\sigma$. So $\sigma$ can be written as $\sigma=213[\sigma_1,1,\sigma_2]$ with $\sigma_1\in\fS_k$ and $\sigma_2\in\fS_{n-k}$. In the rest of the paper, we will always consider $\sigma$ in its inflation form.

For the rest of the paper, we assume that the permutation statistic $\stat:\fS_n\rightarrow\N$ satisfies 
\[
\stat(\sigma) = f(k,n-k)+\stat(\sigma_1)+\stat(\sigma_2) \tag{\textdagger}
\]
for some function $f:\N^2\rightarrow\N$ that is independent of the statistic st.
Some examples of such statistics are the inversion number, the descent number, and the number of occurrences of the consecutive pattern 213:
$$
\underline{213}(\sigma) = \#\{i\in [n-2]: \sigma(i+1)<\sigma(i)<\sigma(i+2)\}.
$$
For these mentioned statistics, we have
\begin{align*}
\inv(\sigma) &= k+\inv(\sigma_1)+\inv(\sigma_2),\\
\des(\sigma) &= 1-\delta_{0,k}+\des(\sigma_1)+\des(\sigma_2),\\
\underline{213}(\sigma)  &= (1-\delta_{0,k})(1-\delta_{k,n})+\underline{213}(\sigma_1)+\underline{213}(\sigma_2).
\end{align*}

For a pattern $\pi$, it will be more beneficial to consider $\pi$ in its \emph{block decomposition} as stated in the following proposition.\\

\begin{prop}
Every 312-avoiding permutation $\pi\in\fS_n(312)$ can be written uniquely as
$$ \pi=\iota_r[\pi_{1*},...,\pi_{r*}] $$
where $r\geq0$ and $\pi_i\in\fS(312)$. Here $\iota_r$ denotes the identity element $12...r$ of $\fS_r$.
\end{prop}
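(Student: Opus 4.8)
The plan is to prove existence and uniqueness of the block decomposition by induction on $n$, exploiting the structural observation about $312$-avoiding permutations that was already established in the excerpt. Recall that if $\sigma\in\fS_{n+1}(312)$ has $\sigma(k+1)=1$, then $\sigma=213[\sigma_1,1,\sigma_2]$ with $\sigma_1\in\fS_k(312)$ and $\sigma_2\in\fS_{n-k}(312)$. The key refinement I want is to look instead at the position of the largest letter, or equivalently to iterate the "split off the first block" operation. Concretely, given $\pi\in\fS_n(312)$ with $n\geq 1$, I would locate the position $p$ of the value $n$ in $\pi$. A quick argument shows that everything after position $p$ must be larger than everything before position $p$: if $a$ sits to the left of $n$ and $b$ sits to the right of $n$ with $a>b$, then $n\,?\,$ — more carefully, $a$, $n$... actually the relevant occurrence is $\pi(p')\,\pi(p)\,\pi(p'')$ is not it; instead take any $a$ before $n$ and $b$ after $n$ with $a > b$: then $a, b$ together with... let me use that $n$ is the max, so $n$ can only play the role of "3" in a $312$; hence if some $a$ before $n$ exceeds some $b$ after $n$, wait that gives pattern with $n$ as the "3", $b$... we need the "1" before the "2", so we'd want a letter smaller than $b$ appearing between — this needs care. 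The cleaner route: use the stated decomposition $\pi=213[\pi_1,1,\pi_2]$ at the position of the value $1$, and define the first block from $\pi_1$ together with the forced rightmost box.

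So here is the induction I would actually run. For $n=0$ take $r=0$; for $n\geq 1$, write $\pi=213[\alpha,1,\beta]$ as in the excerpt, where $\alpha\in\fS_k(312)$ sits in the top-left block (its values are $\{n-k+1,\dots,n\}$ shifted) and $1$ followed by $\beta$ occupy the rest. The positions of $\alpha$ are $1,\dots,k$ and position $k+1$ holds the value $1$; so the first $k+1$ entries of $\pi$ form exactly $\alpha_*=21[\alpha,1]$ (the box added at the lower-right corner), and $\pi = \iota_2[\alpha_*,\beta]$ where $\beta\in\fS_{n-k-1}(312)$. By the inductive hypothesis $\beta=\iota_{r-1}[\pi_{2*},\dots,\pi_{r*}]$ uniquely, and setting $\pi_1:=\alpha$ gives $\pi=\iota_r[\pi_{1*},\dots,\pi_{r*}]$. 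Conversely, any such expression with $r\geq 1$ forces the value $1$ of $\pi$ to lie at the end of the first block $\pi_{1*}$ — that is, at position $|\pi_1|+1$ — because within $\iota_r[\pi_{1*},\dots,\pi_{r*}]$ the global minimum value $1$ occurs in the first block and within $\pi_{1*}=21[\pi_1,1]$ it is the final entry. This pins down $k=|\pi_1|$ and hence $\alpha=\pi_1$ and $\beta=\iota_{r-1}[\pi_{2*},\dots,\pi_{r*}]$, so uniqueness propagates from the uniqueness of the $213[\cdot,1,\cdot]$ decomposition (which is immediate, the splitting point being the unique position of the value $1$) together with the inductive uniqueness for $\beta$.

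The only genuinely non-routine point is verifying that the map $\pi\mapsto(\pi_1,\beta)$ is well defined and lands in $\fS(312)\times\fS(312)$, i.e. that the top-left block really decouples as a $312$-avoider and that no new $312$ pattern can straddle the two blocks in a way that the excerpt's argument didn't already rule out; but the excerpt's observation ("for every pair $i<k+1<j$ we must have $\sigma(i)<\sigma(j)$", which is exactly the inflation relation $\pi=213[\alpha,1,\beta]$) is precisely this fact, so there is nothing left to check. I expect the main obstacle, such as it is, to be purely bookkeeping: keeping straight that "adding a box to the lower-right corner" ($\pi_{i*}=21[\pi_i,1]$) is consistent with the value $1$ of $\pi$ landing at the end of the first block rather than, say, at the bottom of block $i$ for $i>1$ — but since $\iota_r$ places the blocks in increasing order, block $1$ occupies the smallest values, and within it the appended box is the smallest, this is forced. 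I would close by remarking that $r$ equals the number of blocks and is itself an invariant of $\pi$ (it is one more than the number of such splits), which makes the statement "written uniquely" unambiguous.
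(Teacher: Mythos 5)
Your proof is correct and takes essentially the same route as the paper: induct on $n$, split $\pi$ at the position of the value $1$ to write $\pi=12[\pi_{1*},\beta]$ with $\beta\in\fS(312)$, apply the inductive hypothesis to $\beta$, and note that the position of $1$ pins down the first block, which gives the uniqueness the paper dismisses as trivial. (Your parenthetical that $\alpha$'s values are $\{n-k+1,\dots,n\}$ is a slip --- they are $\{2,\dots,k+1\}$ --- but it plays no role in the argument, and the abandoned detour about the largest letter should simply be deleted.)
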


\begin{proof}
The uniqueness part is trivial. The proof of existence of $\pi_1,...,\pi_r$ is by induction on $n$. If $n=0$, there is nothing to proof. Suppose the result holds for $n$. Suppose that $\pi(k+1)=1$. Then $\pi=213[\pi_1,1,\pi']=12[\pi_{1*},\pi']$ where $\pi_1\in\fS_{k}(312)$ and $\pi'\in\fS_{n-k}(312)$. Applying the inductive hypothesis on $\pi'$, we are done.
\end{proof}

Suppose that $\pi\in\fS_n(312)$ has the block decomposition $\pi=\iota_r[\pi_{1*},...,\pi_{r*}]$. For $1\leq i\leq r$, we define $\underline{\pi}(i)$ and $\overline{\pi}(i)$ as
$$
\underline{\pi}(i)= 
\begin{cases}
\pi_1     &\text{ if } i=1, \\
\iota_i[\pi_{1*},...,\pi_{i*}] &\text{ otherwise,} 
\end{cases}
$$ 
and
$$
\overline{\pi}(i)= \iota_{r-i+1}[\pi_{i_*},...,\pi_{r_*}].
$$

Let $\Pi=\{312,\pi\}$. If $\pi$ contains the pattern 312, then every permutation avoiding 312 will automatically avoid $\pi$, which means $F^\inv_n(\Pi)=F^\inv_n(312)$. So for the rest of this paper we will assume that every pattern besides 312 in a set of patterns $\Pi$ avoids 312. 
We will need the following lemma which gives a recursive condition for a permutation $\sigma=213[\sigma_1,1,\sigma_2]\in\fS(312)$ to avoid $\pi$, in terms of $\sigma_1, \sigma_2,$ and the blocks $\pi_{i*}$ of $\pi$. 

%%%%%%%%%%%%%%%%%%%%%%%%%%%%%%%%%%%%%%%%%%%%%%%%%%%%%%%%%
% lemma 1: lem1
%%%%%%%%%%%%%%%%%%%%%%%%%%%%%%%%%%%%%%%%%%%%%%%%%%%%%%%%%
\begin{lem}{\label{lem1}} Let $\sigma=213[\sigma_1,1,\sigma_2], \pi=\iota_r[\pi_{1*},...,\pi_{r*}]\in \fS(312)$. Then $\sigma$ avoids $\pi$ if and only if the condition
\begin{itemize}
\item[$(C_i):$] $\sigma_1$ avoids $\underline{\pi}(i)$ and $\sigma_2$ avoids $\overline{\pi}(i)$.
\end{itemize}
hold for some $i\in [r]$
\end{lem}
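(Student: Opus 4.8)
The plan is to argue geometrically, using the three‑part shape of $\sigma$ and the direct‑sum shape of $\pi$. Write the points of $\sigma=213[\sigma_1,1,\sigma_2]$ as $A\cup\{m\}\cup B$, where $A$ (positions $1,\dots,k$) realizes $\sigma_1$, the point $m$ of value $1$ sits at position $k+1$, and $B$ realizes $\sigma_2$; thus $A$ lies entirely below and to the left of $B$, while $m$ lies below every other point and strictly between $A$ and $B$ horizontally. On the pattern side $\pi=\pi_{1*}\oplus\cdots\oplus\pi_{r*}$. Two elementary facts do the work: each block $\pi_{j*}=21[\pi_j,1]$ is sum‑indecomposable (its last entry is $1$, which is impossible for a nontrivial direct sum), and the unique minimum of $\pi$ is the appended point of $\pi_{1*}$. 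I will also use the monotonicities $\underline{\pi}(1)\subseteq\underline{\pi}(2)\subseteq\cdots\subseteq\underline{\pi}(r)$ and $\overline{\pi}(r)\subseteq\cdots\subseteq\overline{\pi}(2)\subseteq\overline{\pi}(1)$ for pattern containment, immediate from the block structure, together with two gluing observations: (G1) if $\sigma_1$ contains $\pi_{1*}\oplus\cdots\oplus\pi_{t*}$ and $\sigma_2$ contains $\pi_{(t+1)*}\oplus\cdots\oplus\pi_{r*}$ for some $0\le t\le r$, then $\sigma$ contains $\pi$ (place the two occurrences in $A$ and in $B$); (G2) if $\sigma_1$ contains $\pi_1$ and $\sigma_2$ contains $\pi_{2*}\oplus\cdots\oplus\pi_{r*}$, then $\sigma$ contains $\pi$ (take a $\pi_1$‑occurrence in $A$, adjoin $m$ to obtain a $\pi_{1*}$‑occurrence, and put the remaining blocks in $B$).

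To prove that if some $(C_i)$ holds then $\sigma$ avoids $\pi$, I argue the contrapositive: assuming $\sigma$ contains $\pi$, I show that no $(C_i)$ holds. Fix an occurrence and split it into groups $G_1,\dots,G_r$, where $G_j$ is an occurrence of $\pi_{j*}$ and the $G_j$ appear in direct‑sum position. Case 1: the occurrence avoids $m$. Then it lies in $A\cup B$, and since each $\pi_{j*}$ is sum‑indecomposable while $A$ is below‑left of $B$, each $G_j$ lies entirely in $A$ or entirely in $B$, with all the $A$‑groups preceding all the $B$‑groups; say $G_1,\dots,G_t\subseteq A$ and $G_{t+1},\dots,G_r\subseteq B$. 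Then $\sigma_1$ contains $\pi_{1*}\oplus\cdots\oplus\pi_{t*}$ and $\sigma_2$ contains $\pi_{(t+1)*}\oplus\cdots\oplus\pi_{r*}$, so by the monotonicities $\sigma_1$ contains $\underline{\pi}(i)$ whenever $i\le t$ and $\sigma_2$ contains $\overline{\pi}(i)$ whenever $i>t$; hence no $(C_i)$ holds. Case 2: the occurrence uses $m$. Since $m$ has the minimal value it must realize the unique minimum of $\pi$, namely the appended point of $\pi_{1*}$; so $m$ is the last point of $G_1$, the other points of $G_1$ form an occurrence of $\pi_1$ lying to the left of $m$ (hence in $A$), and $G_2,\dots,G_r$ lie to the right of $m$ (hence in $B$). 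Thus $\sigma_1$ contains $\pi_1=\underline{\pi}(1)$ and $\sigma_2$ contains $\pi_{2*}\oplus\cdots\oplus\pi_{r*}$, which contains $\overline{\pi}(i)$ for every $i\ge 2$; again no $(C_i)$ holds.

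For the converse — that $\sigma$ avoiding $\pi$ implies some $(C_i)$ holds — I again argue the contrapositive: suppose that for every $i\in[r]$, $\sigma_1$ contains $\underline{\pi}(i)$ or $\sigma_2$ contains $\overline{\pi}(i)$, and produce an occurrence of $\pi$ in $\sigma$. By the monotonicity of the $\overline{\pi}(i)$, the set of indices $i$ with $\sigma_2$ containing $\overline{\pi}(i)$ is an up‑set of $[r]$; let $i_0$ be its least element, with $i_0=r+1$ when the set is empty. If $i_0=1$ then $\sigma_2$ contains $\overline{\pi}(1)=\pi$ and we are done. Otherwise $i_0\ge 2$, so $\sigma_2$ does not contain $\overline{\pi}(i_0-1)$, and applying the hypothesis at $i=i_0-1$ forces $\sigma_1$ to contain $\underline{\pi}(i_0-1)$. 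If $i_0-1\ge 2$, then $\underline{\pi}(i_0-1)=\pi_{1*}\oplus\cdots\oplus\pi_{(i_0-1)*}$ while $\sigma_2$ contains $\pi_{i_0*}\oplus\cdots\oplus\pi_{r*}$, so (G1) with $t=i_0-1$ gives the occurrence; if $i_0-1=1$, then $\sigma_1$ contains $\pi_1$ and $\sigma_2$ contains $\pi_{2*}\oplus\cdots\oplus\pi_{r*}$, so (G2) gives it. When $i_0=r+1$ the direct sums over blocks $i_0,\dots,r$ are empty and the argument degenerates harmlessly.

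The step I expect to be most delicate is the bookkeeping forced by the asymmetry in the definitions: $\underline{\pi}(1)$ is $\pi_1$ rather than $\pi_{1*}$, whereas every $\overline{\pi}(i)$ keeps all of its appended points. Conceptually this asymmetry is exactly the distinction between occurrences of $\pi$ in $\sigma$ that use the low point $m$ and those that do not — an $m$‑using occurrence needs only a $\pi_1$‑occurrence (not a full $\pi_{1*}$‑occurrence) inside $\sigma_1$, because $m$ supplies the appended point for free. Keeping both directions faithful to this, and absorbing the degenerate cases $i_0=1$, $i_0=r+1$, $r\le 1$, and $\pi_1=\epsilon$, is where care is needed; the underlying geometric facts — sum‑indecomposability of the $\pi_{j*}$, location of the global minimum, and $A$ lying below‑left of $B$ — are routine once stated.
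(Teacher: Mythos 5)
Your proposal is correct and follows essentially the same route as the paper's own proof: both directions are handled contrapositively by analyzing how an occurrence of $\pi$ splits across $\sigma_1$, the minimal entry, and $\sigma_2$ relative to the block decomposition, and by gluing occurrences back together for the converse. Your version simply makes explicit (sum-indecomposability of the blocks $\pi_{j*}$, the location of the global minimum, the monotonicity of $\underline{\pi}(i)$ and $\overline{\pi}(i)$, and the degenerate cases) what the paper's terser argument leaves implicit.
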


\begin{proof} 
First, suppose that $\sigma$ contains $\pi$. Let $j$ be the largest number for which $\sigma_1$ contains $\underline{\pi}(j)$. Then $\sigma_2$ must contain $\overline{\pi}(j+1)$. So $\sigma_1$ contains $\underline{\pi}(i)$ for all $i\leq j$, and $\sigma_2$ contains $\overline{\pi}(i)$ for all $i>j$. Thus none of the $C_i$ holds.

On the other hand, suppose that there is a permutation $\sigma\in\fS(312)$ that avoids $\pi$ but does not satisfy any $C_i$. This means, for every $i$, either $\sigma_1$ contains $\underline{\pi}(i)$ or $\sigma_2$ contains $\overline{\pi}(i)$.
Let $j$ be the smallest number such that $\sigma_1$ does not contain $\underline{\pi}(j)$. Note that $j$ exists and $j>1$ since $j=1$ implies $\sigma_2$ contains $\bar{\pi}(1)=\pi$, a contradiction.
Since $\sigma_1$ does not contain $\underline{\pi}(j)$, $\sigma_2$ must contain $\overline{\pi}(j)$ (by $C_j$). 
But since $\sigma_1$ contain $\underline{\pi}(j-1)$ by minimality of $j$, we have found a copy of $\pi$ in $\sigma$ with $\underline{\pi}(j-1)$ from $\sigma_1$ and $\overline{\pi}(j)$ from $\sigma_2$, a contradiction. 
(For $j=2$, the number 1 in $\sigma$ together with $\pi_1$ in $\sigma_1$ give $\pi_{1*}$.)
\end{proof} 

Before presenting the main result, we state a technical lemma regarding the M\"obius function of a certain poset. See, for example, the chapter 3 of \cite{Stan} for definitions and terminologies about posets and the general treatment of the subject.

Let $\bf{r}$ be the chain of $r$ elements $0<1<...<r-1$. Let $L_r$ be the poset obtained by taking the elements of $\bf{r} \times \bf{r}$ of rank $0$ to $r-1$, i.e. the elements of $L_r$ are the lattice points $(a,b)$ where $a,b\geq 0$ and $a+b<r$. For instance, $L_5$ is the poset shown in Figure 3. We denote the minimal element $(0,0)$ in $L_r$ by $\hat{0}$

\begin{figure}
\begin{center}
\includegraphics[scale=0.5]{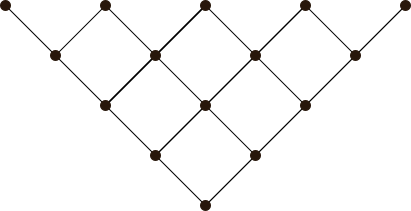}
\caption{The poset $L_5$}
\label{fig:L5}
\end{center}
\end{figure}

Let $\hat{L}_r$ be the poset $L_r$ with the unique maximum element $\hat{1}$ adjoined. Note that for every element $a\in \hat{L}_r$ the up-set $U(a) := \{ x\in \hat{L}_r : x\geq a \}$ of $a$ is isomorphic to $\hat{L}_{r-l(a)}$ where $l(a)$ is the rank of $a$ in $L_r$. So to understand the M\"obius function $\mu_{\hat{L}}$ on these $\hat{L}_r$, it suffices to know the value of $\mu_{\hat{L}_r}(\hat{0},\hat{1})$ for every $r$, which is given by the following lemma. The proof is omitted since it is by a straightforward calculation.

\begin{lem}
We have
$$ \mu_{\hat{L}_r}(\hat{0},\hat{1}) =
\begin{cases}
(-1)^r, &\text{ if } r=1,2,\\
0, &\text{ otherwise.}
\end{cases} $$ 
\end{lem}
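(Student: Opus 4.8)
The plan is to read off $\mu_{\hat{L}_r}(\hat{0},\hat{1})$ from the defining recurrence of the M\"obius function. Since $r\geq 1$ we have $\hat{0}\neq\hat{1}$, so
$$\mu_{\hat{L}_r}(\hat{0},\hat{1}) \;=\; -\sum_{x\in L_r}\mu_{\hat{L}_r}(\hat{0},x),$$
and for each $x\in L_r$ the interval $[\hat{0},x]$ computed in $\hat{L}_r$ is already contained in $L_r$ (the only element of $\hat{L}_r\setminus L_r$ is the top $\hat{1}$, which is $\leq x$ only for $x=\hat{1}$). Hence $\mu_{\hat{L}_r}(\hat{0},x)=\mu_{L_r}(\hat{0},x)$, and it suffices to evaluate these.

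Next I would identify each such interval. If $x=(a,b)$ then $[\hat{0},x]=\{(c,d):0\leq c\leq a,\ 0\leq d\leq b\}$ is the direct product of the chain $0<1<\dots<a$ with the chain $0<1<\dots<b$. Since the M\"obius function is multiplicative over direct products, and a finite chain has $\mu(0,0)=1$, $\mu(0,1)=-1$, $\mu(0,k)=0$ for $k\geq 2$, we get $\mu_{L_r}(\hat{0},(a,b))=1$ for $(a,b)\in\{(0,0),(1,1)\}$, $\mu_{L_r}(\hat{0},(a,b))=-1$ for $(a,b)\in\{(1,0),(0,1)\}$, and $0$ otherwise. So the sum over $L_r$ reduces to a sum over whichever of these four points satisfy $a+b<r$.

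Finally I would split into cases. For $r=1$ only $(0,0)$ lies in $L_r$, so the sum is $1$ and $\mu_{\hat{L}_1}(\hat{0},\hat{1})=-1$; for $r=2$ the points $(0,0),(1,0),(0,1)$ lie in $L_r$ but $(1,1)$ does not, so the sum is $-1$ and $\mu_{\hat{L}_2}(\hat{0},\hat{1})=1$; and for $r\geq 3$ all four points lie in $L_r$, the contributions $1-1-1+1$ cancel, and $\mu_{\hat{L}_r}(\hat{0},\hat{1})=0$. There is no real obstacle here; the computation is short, and the only point worth flagging is that the exceptional values at $r=1,2$ come precisely from whether the point $(1,1)$ lies in $L_r$, which is also why one should not expect a single uniform formula. (An alternative route is to delete an atom or exhibit a shelling of $\hat{L}_r$, but the product-of-chains evaluation above is the most direct.)
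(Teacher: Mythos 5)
Your proof is correct, and it is exactly the ``straightforward calculation'' the paper omits: the paper's own Lemma in Section 3 evaluates $\mu_{L_r}(\hat{0},(a,b))$ by the same product-of-chains reasoning, so your route coincides with the intended one. The case analysis at $r=1,2,3$ (governed by whether $(1,1)\in L_r$) is accurate and complete.
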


We now state the main theorem of this section.

%%%%%%%%%%%%%%%%%%%%%%%%%%%%%%%%%%%%%%%%%%%%%%%
% Main Theorem I
%%%%%%%%%%%%%%%%%%%%%%%%%%%%%%%%%%%%%%%%%%%%%%%
\begin{thm}{\label{main1}} 
Let $\Pi=\{312,\pi\}$. Suppose that the statistic $\stat:\fS\rightarrow\N$ satisfies the condition (\textdagger). Then $F^\stat_n(\Pi;q)$ satisfies
\begin{align*}\label{eq:maineq}
 F^\stat_{n+1}(\Pi;q) = \sum_{k=0}^n q^{f(k,n-k)}  \Bigg[ \sum_{i=1}^r & F^\stat_k(312,\underline{\pi}(i))\cdot F^\stat_{n-k}(312, \overline{\pi}(i)) \\
 &-\sum_{i=1}^{r-1}  F^\stat_k(312,\underline{\pi}(i))\cdot F^\stat_{n-k}(312,\overline{\pi}(i+1)) \Bigg], \tag{*}
\end{align*}
for all $n\geq 0$, where $F^\stat_0(\Pi;q) = 0$ if $\pi=\epsilon$, and $1$ otherwise.
\end{thm}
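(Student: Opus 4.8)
The plan is to count $\sigma\in\fS_{n+1}(\Pi)$ by conditioning on the position $k+1$ of the value $1$, which writes $\sigma=213[\sigma_1,1,\sigma_2]$ with $\sigma_1\in\fS_k(312)$ and $\sigma_2\in\fS_{n-k}(312)$ uniquely. By the hypothesis $(\textdagger)$, each such $\sigma$ contributes $q^{f(k,n-k)}q^{\stat(\sigma_1)}q^{\stat(\sigma_2)}$, so after summing over $k$ it suffices to compute, for each fixed $k$, the weighted count of pairs $(\sigma_1,\sigma_2)\in\fS_k(312)\times\fS_{n-k}(312)$ such that $\sigma=213[\sigma_1,1,\sigma_2]$ avoids $\pi$. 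By Lemma \ref{lem1}, $\sigma$ avoids $\pi$ if and only if at least one of the conditions $(C_i)$, $i\in[r]$, holds. Thus the desired weighted count is
$$ \sum_{\substack{(\sigma_1,\sigma_2)}} q^{\stat(\sigma_1)+\stat(\sigma_2)}\,[\,\exists\, i:\ (C_i)\text{ holds}\,], $$
and the whole theorem reduces to an inclusion–exclusion over which $(C_i)$ hold.

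The key combinatorial observation is a monotonicity built into the block decomposition: $\underline{\pi}(1),\underline{\pi}(2),\dots$ is an increasing chain under pattern containment (each $\underline{\pi}(i)$ is contained in $\underline{\pi}(i+1)$), and dually $\overline{\pi}(r),\dots,\overline{\pi}(1)$ is increasing. Hence, for a fixed $\sigma_1$, the set of $i$ with ``$\sigma_1$ avoids $\underline{\pi}(i)$'' is a down-set $\{1,\dots,a\}$ (possibly empty, possibly all of $[r]$), and for a fixed $\sigma_2$ the set of $i$ with ``$\sigma_2$ avoids $\overline{\pi}(i)$'' is an up-set $\{b,\dots,r\}$. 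So the set of $i\in[r]$ for which $(C_i)$ holds is the interval $\{b,\dots,a\}$, which is nonempty exactly when $b\le a$. The plan is to encode the pair $(a,b)$ — more precisely the pair $(a, r+1-b)$ truncated appropriately — as an element of the poset $\hat L_r$: a pair $(\sigma_1,\sigma_2)$ with ``$\sigma_1$ contains $\underline{\pi}(i)$ for $i>a$'' and ``$\sigma_2$ contains $\overline{\pi}(i)$ for $i<b$'' lands at the lattice point recording these two ``failure depths'', with the adjoined $\hat 1$ corresponding to the case $b>a$ (no $(C_i)$ holds, i.e. $\sigma$ contains $\pi$). Summing $q^{\stat(\sigma_1)+\stat(\sigma_2)}$ over pairs whose failure point is $\geq$ a given element $x\in\hat L_r$ factors as $F^\stat_k(312,\underline\pi(a+1))\cdot F^\stat_{n-k}(312,\overline\pi(b))$ — here one must match indices carefully, using $\underline\pi(i)$ as the obstruction that $\sigma_1$ \emph{does} avoid — and one wants the sum over pairs landing at points $x\neq\hat 1$, i.e. those for which $\sigma$ avoids $\pi$.

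Now apply Möbius inversion on $\hat L_r$: writing $G(x)$ for the weighted count of pairs with failure point $\geq x$ and $g(x)$ for the weighted count with failure point exactly $x$, we have $g = G * \mu_{\hat L_r}$, and the quantity we want is $\sum_{x\neq\hat 1} g(x) = G(\hat 0) - g(\hat 1)$, while $g(\hat 1) = \sum_{x\le\hat 1}\mu_{\hat L_r}(x,\hat 1)G(x)$. By Lemma 2.12, $\mu_{\hat L_r}(x,\hat 1)$ is $-1$ on points of rank $r-1$, $+1$ on points of rank $r-2$, and $0$ elsewhere (applying the lemma to the up-set $U(x)\cong\hat L_{r-l(x)}$). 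The rank-$(r-1)$ points of $L_r$ are $(i-1, r-i)$ for $i\in[r]$, contributing $-\sum_{i=1}^r F^\stat_k(312,\underline\pi(i))F^\stat_{n-k}(312,\overline\pi(i))$ with a sign, and the rank-$(r-2)$ points are $(i-1, r-1-i)$ for $i\in[r-1]$, contributing $+\sum_{i=1}^{r-1}F^\stat_k(312,\underline\pi(i))F^\stat_{n-k}(312,\overline\pi(i+1))$; combining with $G(\hat 0)=F^\stat_k(312,\epsilon)\cdot F^\stat_{n-k}(312,\epsilon)$ and being careful that the rank-$0$ and top terms cancel or combine correctly yields exactly the bracketed expression in $(*)$, and multiplying by $q^{f(k,n-k)}$ and summing over $k$ finishes the proof. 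I expect the main obstacle to be purely bookkeeping: pinning down the exact correspondence between a lattice point $(c,d)\in L_r$ and the pair of patterns $(\underline\pi(\,\cdot\,),\overline\pi(\,\cdot\,))$ that $\sigma_1,\sigma_2$ are required to avoid versus contain, so that $G(x)$ factors as claimed and the index shifts in the two sums of $(*)$ come out right; the Möbius computation itself is immediate from Lemma 2.12, and the base case $n=0$ (together with the stated convention $F^\stat_0(\Pi;q)$) is a direct check.
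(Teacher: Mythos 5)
Your overall plan is the same as the paper's: condition on the position of the value $1$, pull out $q^{f(k,n-k)}$ via (\textdagger), invoke Lemma \ref{lem1}, exploit the fact that the set of indices $i$ for which $(C_i)$ holds is an interval, and organize the inclusion--exclusion by the M\"obius function of $\hat L_r$. However, two concrete steps are wrong as written. First, the monotonicity is backwards: since $\underline{\pi}(1)$ is contained in $\underline{\pi}(2)$, etc., avoidance is inherited by the \emph{larger} patterns, so for fixed $\sigma_1$ the set $\{i:\sigma_1\text{ avoids }\underline{\pi}(i)\}$ is an up-set $\{a,\dots,r\}$ (possibly empty), and $\{i:\sigma_2\text{ avoids }\overline{\pi}(i)\}$ is a down-set $\{1,\dots,b\}$ --- the opposite of what you state. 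The $(C_i)$-set is still an interval, so this alone is a repairable relabeling, but it feeds into the index bookkeeping that you yourself identify as the delicate part, and the formula $(*)$ is not symmetric under swapping the two roles.

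The more serious problem is the M\"obius step. You define $G(x)$ as the weighted count of pairs whose failure point is $\geq x$, but such up-set sums do \emph{not} factor as products of avoidance polynomials: for every $x$ they include all pairs mapped to $\hat 1$ (i.e.\ all $\sigma$ containing $\pi$), and ``failure point $\geq x$'' is the union of a containment-type product event with the non-product event that the two failure depths sum to at least $r$. (Your own two evaluations of $G(\hat 0)$ already clash: the total on one hand, and $F^\stat_k(312,\epsilon)\cdot F^\stat_{n-k}(312,\epsilon)=0$ on the other.) Moreover the inversion formula you use, $g(\hat 1)=\sum_{x\le\hat 1}\mu(x,\hat 1)G(x)$, is the one valid for cumulative-below sums $G(x)=\sum_{y\le x}g(y)$, not for your up-set $G$. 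The quantities that do factor are exactly those down-set sums: for $x=(c,d)\in L_r$ one has $\sum_{y\le x}g(y)=F^\stat_k(312,\underline{\pi}(c+1))\cdot F^\stat_{n-k}(312,\overline{\pi}(r-d))$, which are precisely the weighted sizes of the intersections $A_i\cap A_j$ in the paper's proof. With these, using $\sum_{z\in[x,\hat 1]}\mu(z,\hat 1)=0$ for $x<\hat 1$, the desired quantity is $\sum_{x\neq\hat 1}g(x)=-\sum_{x\in L_r}\mu(x,\hat 1)\bigl(\sum_{y\le x}g(y)\bigr)$, and the technical lemma on $\mu_{\hat L_r}(\hat 0,\hat 1)$ applied to up-sets $U(x)$ gives the bracket of $(*)$ directly, with no ``total'' or $G(\hat 0)$ term to cancel. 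So the argument is repairable, and once repaired it coincides with the paper's intersection-poset proof; but as stated, the central factorization claim fails and the final combination step is not justified.
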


\begin{proof} For $ k\in \{0,1,...,n\}$, we write $\fS^k_{n+1}(\Sigma)$ where $\Sigma\subset\fS$ to denote the set of permutations $\sigma\in\fS_{n+1}(\Sigma)$ such that $\sigma(k+1)=1$. In particular, 
$$\fS^k_{n+1}(312) = \{\sigma=213[\sigma_1,1,\sigma_2]: \sigma_1\in \fS_k(312) \text{ and } \sigma_2\in\fS_{n-k}(312)\}.$$

\noindent Fix $k$, and let $A_i (i\in[r])$ be the set of permutations in $\fS^k_{n+1}(312)$ satisfying the condition $C_i$. So $\fS^k_{n+1}(\Pi)=A_1\cup A_2 \cup \cdots \cup A_n =: A.$ Observe that if $i_1<...<i_k$ then
$$ A_{i_1}\cap A_{i_2}\cap\cdots\cap A_{i_k} = A_{i_1}\cap A_{i_k} =: A_{i_1,i_k}. $$
This is since satisfying the conditions $C_{i_1},...,C_{i_k}$ is equivalent to satisfying the conditions $C_{i_1}$ and $C_{i_k}$.

Let $P$ be the intersection poset of $A_1,...,A_n$, which consists of the unique maximal element $A$, the $A_i$,  and $A_{i,j}$ for $1\leq i < j\leq r$. So $P$ is isomorphic to the set $\hat{L}_r$. 
Thus the M\"obius function $\mu_P(T,A)$ for $T\in P$ is given by
$$ \mu_P(T,A) = 
\begin{cases}
1 &\text{ if } T=A \text{ or } A_{i,i+1} \text{ for some } i,\\
-1 &\text{ if } T=A_i \text{ for some } i,\\
0 &\text{ otherwise}. 
\end{cases} $$
For $T\in P$, we define $g:P\rightarrow \bC(x: x\in A)$ by
$$ g(T)=\sum_{x\in T} x.$$
So the M\"obius inversion formula (\cite{Stan}, section 3.7) implies that
\begin{align*}
g(A) &= -\sum_{T < A} \mu_P(T,A) g(T)\\
&=\sum_{i=1}^r g(A_i) - \sum_{i=1}^{r-1} g(A_i \cap A_{i+1}).
\end{align*}

\noindent By mapping $\sigma\mapsto q^{\stat(\sigma)}$ for all $\sigma\in A$, $g(A)$ is sent to $F^\stat_{n+1,k}(\Pi;q) := \sum_{\sigma\in\fS^k_{n+1}(\Pi)} q^{\stat(\sigma)}$. Hence,

\begin{align*}
F^\stat_{n+1,k}(\Pi;q) 
 &= \sum_{i=1}^r \sum_{\sigma\in A_i} q^{\stat(\sigma)} - \sum_{i=1}^{r-1} \sum_{\sigma\in A_i\cap A_{i+1}} q^{\stat(\sigma)} \\
 &= q^{f(k,n-k)} \left[  \sum_{i=1}^r  \sum_{\sigma\in A_i} q^{\stat( \sigma_1) + \stat(\sigma_2)} - \sum_{i=1}^{r-1} \sum_{\sigma\in A_i\cap A_{i+1}} q^{\stat(\sigma_1) + \stat(\sigma_2)} \right],
\end{align*}
where the second equality is obtained from the condition (\textdagger).\\
Note that $\sigma\in A_i$ iff $\sigma_1$ avoids $\underline{\pi}(i)$ and $\sigma_2$ avoids $\overline{\pi}(i)$, and $\sigma\in A_i\cap A_{i+1}$ iff $\sigma_1$ avoids $\underline{\pi}(i)$ and $\sigma_2$ avoids $\overline{\pi}(i+1)$. Thus

$$\sum_{\sigma\in A_i} q^{\stat(\sigma_1) + \stat(\sigma_2)} = F^\stat_{k}(312,\underline{\pi}(i))\cdot F^\stat_{n-k}(312,\overline{\pi}(i)) $$
and
$$\sum_{\sigma\in A_i\cap A_{i+1}} q^{\stat(\sigma_1) + \stat(\sigma_2)} = F^\stat_{k}(312,\underline{\pi}(i))\cdot F^\stat_{n-k}(312,\overline{\pi}(i+1)).$$

\noindent Therefore
\begin{align*}
F^\stat_{n+1,k}(\Pi;q) = q^{f(k,n-k)} \Bigg[  \sum_{i=1}^r  & F^\stat_{k}(312,\underline{\pi}(i))\cdot F^\stat_{n-k}(312,\overline{\pi}(i))\\
 &- \sum_{i=1}^{r-1} F^\stat_{k}(312,\underline{\pi}(i))\cdot F^\stat_{n-k}(312,\overline{\pi}(i+1)) \Bigg].
\end{align*}
Summing the equation above from $k=0$ to $n$, we get the stated result. 
\end{proof}

%%%%%%%%%%%%%%%%%%%%%%%%%%%%%%%%%%%%%%%%%%%%%%%%%%%%%%%%%%%
% example: q-analoque of odd Fibonacci numbers
%%%%%%%%%%%%%%%%%%%%%%%%%%%%%%%%%%%%%%%%%%%%%%%%%%%%%%%%%%%
\begin{ex}(A $q$-analoque of odd Fibonacci numbers)
It is well-known that the permutations avoiding 312 and 1432 are counted by the Fibonacci numbers $F_{2n+1}$ assuming $F_1=F_2=1$ (see \cite{West} for example). Let $A_n=F_{2n-1}$. It can be shown that the $A_n$ satisfy
$$ A_{n+1} = A_n + \sum_{k=0}^{n-1} 2^{n-k-1} A_k.$$
Theorem \ref{main1} gives $q$-analogues of this relation. Here, we will consider the inversion statistic inv.

Let $\pi=1432=12[\epsilon_*,21_*]$ and $\Pi=\{312,\pi\}$. Since $\underline{\pi}(1) = \epsilon$ and $F^{\inv}_{n}(312,\epsilon)=0$ for all $n$, theorem \ref{main1} implies
\begin{align*}
F^{\inv}_{n+1}(\Pi) 
&= \sum_{k=0}^{n} F^{\inv}_{k}(\Pi) F^{\inv}_{n-k}(312,321)\\
&= q^n F^\inv_n(\Pi) + \sum_{k=0}^{n-1} q^k (1+q)^{n-k-1} F^\inv_{k}(\Pi),
\end{align*}
where the last equality is by \cite{DDJSS}, Proposition 4.2.
%This gives a $q$-analogue of Fibonacci numbers $F_{2n-1}$. 
%Set $F(x):=\sum_{n\geq 0} I_n(\Pi)x^n$. Then $F(x)$ satisfies
%$$ F(x) = 1+\frac{x(1-xq)}{1-x(1+q)} F(qx). $$
\end{ex}

%%%%%%%%%%%%%%%%%%%%%%%%%%%%%%%%%%%%%%%%%%%%%%%%%%%%%%%%%%
% Corollaries
%%%%%%%%%%%%%%%%%%%%%%%%%%%%%%%%%%%%%%%%%%%%%%%%%%%%%%%%%%
\begin{cor}{\label{cor1}}
Let $\pi_1,...,\pi_r,\pi'_1,...,\pi'_r$ be permutations such that $\{312,\pi_i\} \stackrel{\stat}{\equiv} \{312, \pi'_i\}$ for all $i$. Set $\pi=\iota_r[\pi_{1*},...,\pi_{r*}]$ and $\pi'=\iota_r[\pi'_{1*},...,\pi'_{r*}]$. Then $\{312,\pi\}$ and $\{312,\pi'\}$ are also $\stat$-Wilf equivalent, i.e. $F^\stat_n(312,\pi)=F^\stat_n(312,\pi')$ for all $n$.
\end{cor}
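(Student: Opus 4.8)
The plan is to prove Corollary~\ref{cor1} by strong induction on $n$, using the recursion (\ref{eq:maineq}) from Theorem~\ref{main1} together with a careful bookkeeping of the block decompositions $\underline{\pi}(i)$ and $\overline{\pi}(i)$. The key observation is that since $\pi = \iota_r[\pi_{1*},\dots,\pi_{r*}]$ is already written in block-decomposition form, we have $\underline{\pi}(i) = \iota_i[\pi_{1*},\dots,\pi_{i*}]$ and $\overline{\pi}(i) = \iota_{r-i+1}[\pi_{i*},\dots,\pi_{r*}]$ — that is, these are themselves built as $\iota$-inflations of shifted $\pi_{j*}$'s. So the inductive hypothesis, suitably strengthened, should apply to the sets $\{312,\underline{\pi}(i)\}$ and $\{312,\overline{\pi}(i)\}$ as well, provided we also know that $\{312,\underline{\pi}(i)\} \stackrel{\stat}{\equiv} \{312,\underline{\pi'}(i)\}$ and likewise for the $\overline{\pi}(i)$.

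Thus the first step is to set up the right induction statement. Rather than proving the corollary for one fixed $r$ directly, I would prove simultaneously (by induction on $n$, and within fixed $n$ perhaps by induction on $r$) that for every $1 \le i \le r$,
\[
F^\stat_n(312,\underline{\pi}(i)) = F^\stat_n(312,\underline{\pi'}(i)) \quad\text{and}\quad F^\stat_n(312,\overline{\pi}(i)) = F^\stat_n(312,\overline{\pi'}(i)).
\]
The base cases $i = 1$ for $\underline{\pi}$ (where $\underline{\pi}(1) = \pi_1$, $\underline{\pi'}(1) = \pi'_1$) and $i = r$ for $\overline{\pi}$ (where $\overline{\pi}(r) = \pi_{r*}$, $\overline{\pi'}(r) = \pi'_{r*}$) are handled by the hypothesis $\{312,\pi_i\} \stackrel{\stat}{\equiv} \{312,\pi'_i\}$, noting that $\{312, \pi_{r*}\}\stackrel{\stat}{\equiv}\{312,\pi'_{r*}\}$ follows from Theorem~\ref{main1} applied with the single block $\pi_r$ (or directly, since $\pi_{r*} = 21[\pi_r,1]$ and one checks that avoiding $\pi_{r*}$ inside a $312$-avoider reduces to an $\stat$-preserving condition on $\pi_r$). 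The inductive step in $i$ (equivalently, in $r$) is exactly the content of Theorem~\ref{main1}: writing out (\ref{eq:maineq}) for $\{312,\underline{\pi}(i)\}$ expresses $F^\stat_{n+1}(312,\underline{\pi}(i))$ as a polynomial combination of the $F^\stat_k(312,\underline{\pi}(j))$ and $F^\stat_{n-k}(312,\overline{\pi}(j))$ for $j \le i$ and $k \le n$, all of which are equal to their primed counterparts by the inductive hypothesis (smaller $n$, or smaller $r$).

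Once this strengthened statement is in hand, the corollary itself is immediate: apply (\ref{eq:maineq}) to both $\{312,\pi\}$ and $\{312,\pi'\}$ for the full value $r$, and observe that every term on the right-hand side — each $q^{f(k,n-k)} F^\stat_k(312,\underline{\pi}(i)) F^\stat_{n-k}(312,\overline{\pi}(i))$ and each $q^{f(k,n-k)} F^\stat_k(312,\underline{\pi}(i)) F^\stat_{n-k}(312,\overline{\pi}(i+1))$ — agrees with the corresponding primed term because $k, n-k \le n$ and the factors match by the induction. Summing over $k$ and $i$ gives $F^\stat_{n+1}(312,\pi) = F^\stat_{n+1}(312,\pi')$. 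I also need to check the $n = 0$ base case of the outer induction: $F^\stat_0(312,\pi)$ equals $0$ if $\pi = \epsilon$ and $1$ otherwise, and $\pi = \epsilon$ iff $r = 0$, which happens iff $\pi' = \epsilon$, so the base case is fine.

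The main obstacle I anticipate is organizing the double induction cleanly — making sure that when I invoke Theorem~\ref{main1} for $\{312,\underline{\pi}(i)\}$, the blocks appearing on its right-hand side are genuinely of strictly smaller "size" (either smaller $n$ or, at the same $n$, strictly fewer blocks) so that the induction is well-founded. In particular, in the recursion for $\{312,\underline{\pi}(i)\}$ the term with $k = n$ produces $F^\stat_n(312,\underline{\pi}(i))$ again (the leading $q^{f(n,0)}$ term), so a naive "induction on $n$ alone" does not close; one must peel off that term or induct on the pair $(n, r)$ lexicographically, exactly as the structure of Example~2.8 suggests. Beyond that, the argument is routine bookkeeping, since all the genuinely combinatorial content — the Lemma~\ref{lem1} characterization of avoidance and the Möbius-function computation — is already packaged inside Theorem~\ref{main1}.
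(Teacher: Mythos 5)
Your proposal is correct and is essentially the paper's own argument: induct on $n$, apply the recursion (*) of Theorem \ref{main1} to both $\{312,\pi\}$ and $\{312,\pi'\}$, and match terms using the fact that each $\underline{\pi}(i)$ and $\overline{\pi}(i)$ is again of the corollary's form, so the induction hypothesis (quantified over all such pattern pairs) applies at the indices $k, n-k \le n$. Your worry about the $k=n$ term is unfounded---the right-hand side of (*) only involves $F^{\stat}$ at indices at most $n$, never $n+1$---so the auxiliary induction on $r$ (or lexicographic induction on $(n,r)$) is unnecessary, though harmless.
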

\begin{proof}
The proof is by induction over $n$. 
If $n=0$, then the statement trivially holds.
Now suppose the statement holds up to $n$. Then for $0\leq k\leq n$ and $1\leq i\leq r$, we have $F^\stat_k(312,\underline{\pi}(i)) = F^\stat_k(312,\underline{\pi'}(i))$ and $F^\stat_{n-k}(312,\overline{\pi}(i))=F^\stat_{n-k}(312,\overline{\pi'}(i)).$ Hence $F^\stat_{n+1}(312,\pi)=F^\stat_{n+1}(312,\pi')$ by comparing the terms on the right hand side of (*).
\end{proof}

As mentioned at the end of section one, for the inversion statistic we can choose take each $\pi'_i$ to be either $\pi_i$ or $\pi_i^t$. Indeed, this construction works for every statistic $\stat$ satisfying (\textdagger) and that $\stat(\sigma)=\stat(\sigma^t)$ for all $\sigma\in\fS(312)$.
Besides the inversion statistic, the descent statistic also possesses this property.
To justify this fact, we write $\sigma=213[\sigma_1,1,\sigma_2]\in\fS(312)$ where $\sigma_1,\sigma_2\in\fS$.
Observe that $\sigma^t=132[\sigma_2^t,\sigma_1^t,1]$ and 
$$\des(\sigma^t) = \des(\sigma_2^t) + \des(\sigma_1^t) + (1-\delta_{0,k}) $$
where $k=|\sigma_1^t|=|\sigma_1|$. The proof then proceeds by induction on $n=|\sigma|$.
Note that, however, it is not true in general that the matrix transposition preserves the descent number.

%%%%%%%%%%%%%%%%%%%%%%%%%%%%%%%%%%%%%%%%%%%%%%%
%%            SECTION III
%%%%%%%%%%%%%%%%%%%%%%%%%%%%%%%%%%%%%%%%%%%%%%%

{\large\section{Generalization}}

In this section, we will generalize the theorem \ref{main1} to the case when $\Pi$ consists of 312 and other patterns. Then we give a generalized version of the corollary \ref{cor1}.

\begin{lem}
Let $L$ be the poset $L_{r_1}\times\cdots\times L_{r_m}$ and $\hat{L}$ the poset $L$ adjoined by the maximal element $\hat{1}$. Let $\mu=\mu_{\hat{L}}$ be the M\"obius function on $\underline{L}$. Then
$\mu(\hat{0},\hat{1}) = 0$ unless each $r_i\in\{1,2\}$, in which case 
$\mu(\hat{0},\hat{1}) = (-1)^{|S|+1}$, where $|S|=\{i:r_i=2\}$.
\end{lem}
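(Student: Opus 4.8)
The plan is to reduce the whole computation to the single-factor case already handled in the lemma giving $\mu_{\hat{L}_r}(\hat{0},\hat{1})$, by combining the product theorem for M\"obius functions with the elementary identity relating the M\"obius function of a poset to that of the same poset with a top element adjoined. First I would record the following general fact: if $P$ is a finite poset with minimum $\hat{0}$ and $\hat{P}:=P\cup\{\hat{1}\}$ is obtained by adjoining a maximum, then, since $\sum_{\hat{0}\leq x\leq\hat{1}}\mu_{\hat{P}}(\hat{0},x)=0$ and the interval $[\hat{0},x]$ is the same in $P$ and in $\hat{P}$ for every $x\in P$, we get
$$ \mu_{\hat{P}}(\hat{0},\hat{1}) = -\sum_{x\in P}\mu_P(\hat{0},x). $$
Applying this to $P=L=L_{r_1}\times\cdots\times L_{r_m}$ turns the desired quantity into a sum of $\mu_L(\hat{0},\cdot)$ over all of $L$.

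Next I would invoke the product theorem for M\"obius functions (\cite{Stan}, Section 3.8), which gives $\mu_L(\hat{0},(x_1,\dots,x_m)) = \prod_{i=1}^m\mu_{L_{r_i}}(\hat{0},x_i)$. Substituting this into the previous display and factoring the sum over the product poset yields
$$ \mu_{\hat{L}}(\hat{0},\hat{1}) = -\sum_{(x_1,\dots,x_m)\in L}\ \prod_{i=1}^m\mu_{L_{r_i}}(\hat{0},x_i) = -\prod_{i=1}^m\Bigl(\ \sum_{x_i\in L_{r_i}}\mu_{L_{r_i}}(\hat{0},x_i)\Bigr). $$
Now I would apply the adjoin-a-top identity once more, this time to each $\hat{L}_{r_i}$ separately, rewriting $\sum_{x_i\in L_{r_i}}\mu_{L_{r_i}}(\hat{0},x_i) = -\mu_{\hat{L}_{r_i}}(\hat{0},\hat{1})$, which gives
$$ \mu_{\hat{L}}(\hat{0},\hat{1}) = -\prod_{i=1}^m\bigl(-\mu_{\hat{L}_{r_i}}(\hat{0},\hat{1})\bigr) = (-1)^{m+1}\prod_{i=1}^m\mu_{\hat{L}_{r_i}}(\hat{0},\hat{1}). $$

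Finally I would substitute the values from the earlier lemma: $\mu_{\hat{L}_{r_i}}(\hat{0},\hat{1})=(-1)^{r_i}$ if $r_i\in\{1,2\}$ and $0$ otherwise. Hence the product vanishes unless every $r_i\in\{1,2\}$, and in that case $\prod_i(-1)^{r_i}=(-1)^{\sum_i r_i}$. Writing $S=\{i:r_i=2\}$, we have $\sum_i r_i = (m-|S|)+2|S| = m+|S|$, so that $\mu_{\hat{L}}(\hat{0},\hat{1}) = (-1)^{m+1}(-1)^{m+|S|} = (-1)^{|S|+1}$, as claimed.

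There is no serious obstacle here; the argument is essentially bookkeeping. The one point that must be handled with care is that $\hat{L}$ is the \emph{product poset $L$ with a single top element adjoined}, which is genuinely different from the product $\hat{L}_{r_1}\times\cdots\times\hat{L}_{r_m}$ of the individually capped factors, so the product theorem cannot be applied directly to $\hat{L}$; routing through the uncapped product $L$ and using the adjoin-a-top identity twice is precisely what circumvents this. Keeping the signs straight in the two applications of that identity is the only other thing to watch.
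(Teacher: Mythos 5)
Your proposal is correct and follows essentially the same route as the paper: reduce via $\mu_{\hat{L}}(\hat{0},\hat{1})=-\sum_{a\in L}\mu_L(\hat{0},a)$, apply the product theorem for M\"obius functions, and factor the sum into per-coordinate sums over the $L_{r_i}$. The only (harmless) difference is that you evaluate each factor $\sum_{a\in L_r}\mu_{L_r}(\hat{0},a)$ by a second application of the adjoin-a-top identity together with the earlier lemma on $\mu_{\hat{L}_r}(\hat{0},\hat{1})$, whereas the paper computes that sum directly from the nonzero values of $\mu_{L_r}(\hat{0},\cdot)$; both yield $1$, $-1$, $0$ for $r=1$, $r=2$, $r\geq 3$, and your sign bookkeeping $(-1)^{m+1}(-1)^{m+|S|}=(-1)^{|S|+1}$ is right.
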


\begin{proof}
Let $a=(a_1,...,a_m)\in L$. Then $\mu(\hat{0},a)=\prod_{i=1}^m \mu_i(\hat{0},a_i)$, where $\mu_i$ is the M\"obius function on $L_{r_i}$. So
$$ \mu(\hat{0},\hat{1})=-\sum_{a\in L} \mu(\hat{0}, a) = -\prod_{i=1}^m \left( \sum_{a_i\in L_{r_i}} \mu_{i}(\hat{0},a_i) \right).$$
Note that for $r\geq 3$ $\mu_{L_r}(\hat{0},a)=0$ unless $a\in\{(0,0),(1,0),(0,1),(1,1)\},$ in which cases $\mu_{L_r}(\hat{0},a)$ is 1,-1,-1,1, respectively. So $\sum_{a\in L_r} \mu_{L_r}(\hat{0},a_i) = 0$ unless $r=1,2$.
In the case of $r=1,2$, it can easily be checked that $\sum_{a\in L_r} \mu_{L_r}(\hat{0},a_i) = 1$ if $r=1$ and -1 if $r=2$. So if $r_i\geq 3$ for some $i$, then $\mu(\hat{0},\hat{1})=0$. If each $r_i\in\{1,2\}$, then each index $i$ for which $r_i=2$ contributes a -1 to the product on the right hand side of the previous equation. Thus $\mu(\hat{0},\hat{1}) = (-1)^{|S|+1}$.
\end{proof}

The following theorem is a generalization of (\ref{main1}). For convenience, we introduce the following notations.
Let $\Pi=\{312,\pi^{(1)},...,\pi^{(m)}\}$ where $\pi^{(j)}=\iota_{r_j}[(\pi^{(j)}_1)_*,...,(\pi^{(j)}_{r_j})_*]$.
For $I=(i_1,...,i_m)$, we define
$$\underline{\Pi}_I = \{312,\underline{\pi^{(1)}}(i_1),...,\underline{\pi^{(m)}}(i_m)\}$$
and
$$\overline{\Pi}_I = \{312,\overline{\pi^{(1)}}(i_1),...,\overline{\pi^{(m)}}(i_m)\}. $$

%%%%%%%%%%%%%%%%%%%%%%%%%%%%%%%%%%%%%%%%%%%%%%%%%%%%%%%%%%
% Main thm 2
%%%%%%%%%%%%%%%%%%%%%%%%%%%%%%%%%%%%%%%%%%%%%%%%%%%%%%%%%%
\begin{thm}{\label{main2}} 
Let $\Pi=\{312,\pi^{(1)},...,\pi^{(m)}\}$ where $\pi^{(i)}=\iota_{r_i}[(\pi^{(i)}_1)_*,...,(\pi^{(i)}_{r_i})_*]$. Then $F^{\stat}_0(\Pi)= 0$ if some $\pi_i=\epsilon$ and $1$ otherwise, and for $n\geq 1$
$$
F^{\stat}_{n+1}(\Pi;q) 
 = \sum_{k=0}^n q^{f(k,n-k)} \Bigg[ \sum_{S\subseteq [m]} (-1)^{|S|}  \sum_{\substack{I=(i_1,...,i_m): \\1\leq i_j\leq r_j-\delta_j}} F^{\stat}_k(\underline{\Pi}_I)\cdot F^\stat_{n-k}(\overline{\Pi}_{I+\delta}) \Bigg],
$$
Here $\delta_j = 1$ if $j\in S$ and $0$ if $j\notin S$.
\end{thm}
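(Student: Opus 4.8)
The plan is to mimic the proof of Theorem \ref{main1}, replacing the single intersection poset $\hat L_r$ by the product $\hat L_{r_1}\times\cdots\times\hat L_{r_m}$ and invoking the preceding M\"obius-function lemma in place of the rank-two lemma used there. First I would fix $k\in\{0,1,\dots,n\}$ and work inside $\fS^k_{n+1}(312)=\{\sigma=213[\sigma_1,1,\sigma_2]:\sigma_1\in\fS_k(312),\ \sigma_2\in\fS_{n-k}(312)\}$. By Lemma \ref{lem1} applied to each pattern $\pi^{(j)}$ separately, $\sigma\in\fS^k_{n+1}(\Pi)$ if and only if for every $j\in[m]$ there is some $i_j\in[r_j]$ with $\sigma_1$ avoiding $\underline{\pi^{(j)}}(i_j)$ and $\sigma_2$ avoiding $\overline{\pi^{(j)}}(i_j)$. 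For $j\in[m]$ and $i\in[r_j]$ let $A^{(j)}_i$ be the set of $\sigma\in\fS^k_{n+1}(312)$ meeting condition $C^{(j)}_i$ (i.e. $\sigma_1$ avoids $\underline{\pi^{(j)}}(i)$ and $\sigma_2$ avoids $\overline{\pi^{(j)}}(i)$), so that $\fS^k_{n+1}(\Pi)=\bigcap_{j=1}^m\bigl(\bigcup_{i=1}^{r_j}A^{(j)}_i\bigr)$.

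Next I would set up the intersection poset. Exactly as in the proof of Theorem \ref{main1}, within a single fixed $j$ one has $A^{(j)}_{i_1}\cap A^{(j)}_{i_2}=A^{(j)}_{i_1}\cap A^{(j)}_{\min,\max}$, so the meet semilattice generated by the $A^{(j)}_i$ is the poset $L_{r_j}$ (whose elements I index by pairs $(a,b)$ with $a+b<r_j$, the element $A^{(j)}_i\cap A^{(j)}_{i'}$ with $i<i'$ corresponding to the lattice point recording how far $i$ is from the left and $i'$ from the right). Intersecting across different $j$ multiplies these, so the intersection poset $P$ of the family $\{A^{(j)}_i\}$, with its top element $A:=\fS^k_{n+1}(\Pi)$ adjoined, is isomorphic to $\hat L_{r_1}\times\cdots\times\hat L_{r_m}$. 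Applying the M\"obius function $g(T)=\sum_{x\in T}x\mapsto -\sum_{T<A}\mu_P(T,A)g(T)$ and using the preceding lemma — which says $\mu_P(T,A)$ vanishes unless, in each coordinate $j$, $T$ lies at a lattice point of $L_{r_j}$ of rank $0$ or $1$, with sign $(-1)^{|S|}$ where $S=\{j:\text{rank }1\text{ in coordinate }j\}$ (note the lemma's $(-1)^{|S|+1}$ for $\mu(\hat0,\hat1)$ becomes $(-1)^{|S|}$ after the sign flip in M\"obius inversion) — I would obtain
$$
g(A)=\sum_{S\subseteq[m]}(-1)^{|S|}\sum_{\substack{I=(i_1,\dots,i_m)\\1\le i_j\le r_j-\delta_j}} g\Bigl(\bigcap_{j=1}^m T^{(j)}_{i_j}\Bigr),
$$
where for $j\notin S$ the term $T^{(j)}_{i_j}=A^{(j)}_{i_j}$ (rank-$0$ element), and for $j\in S$ it is $A^{(j)}_{i_j}\cap A^{(j)}_{i_j+1}$ (rank-$1$ element); the ranges $1\le i_j\le r_j-\delta_j$ are exactly what make these lattice points legitimate.

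To finish I would identify the generic term. A permutation $\sigma=213[\sigma_1,1,\sigma_2]$ lies in $\bigcap_j T^{(j)}_{i_j}$ iff $\sigma_1$ avoids $\underline{\pi^{(j)}}(i_j)$ for all $j$ and $\sigma_2$ avoids $\overline{\pi^{(j)}}(i_j+\delta_j)$ for all $j$; that is, $\sigma_1\in\fS_k(\underline\Pi_I)$ and $\sigma_2\in\fS_{n-k}(\overline\Pi_{I+\delta})$ in the notation defined before the theorem (using that all $\sigma_1,\sigma_2$ here automatically avoid $312$). Then applying $\sigma\mapsto q^{\stat(\sigma)}$ to $g(A)$ and using ($\textdagger$) to factor $q^{\stat(\sigma)}=q^{f(k,n-k)}q^{\stat(\sigma_1)}q^{\stat(\sigma_2)}$ turns $g(\bigcap_j T^{(j)}_{i_j})$ into $q^{f(k,n-k)}F^\stat_k(\underline\Pi_I)\cdot F^\stat_{n-k}(\overline\Pi_{I+\delta})$, and summing over $k$ from $0$ to $n$ gives the claimed formula; the base case $n=0$ is the stated convention. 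The main obstacle I anticipate is purely bookkeeping: getting the index ranges and the correspondence between subsets $S$, shift vectors $\delta$, and rank-$(0,1)$ lattice points of each $L_{r_j}$ exactly right, and checking that the product poset really is the intersection poset (in particular that there are no extra coincidences among the $A^{(j)}_i$ across different $j$) — but this follows coordinatewise from the single-pattern analysis already carried out in Theorem \ref{main1}, so no genuinely new idea is needed.
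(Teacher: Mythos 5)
Your plan is correct and takes essentially the same route as the paper: fix $k$, apply Lemma \ref{lem1} to each pattern $\pi^{(j)}$, pass to the intersection poset, invoke the preceding M\"obius-function lemma, factor $q^{\stat(\sigma)}$ via (\textdagger), and sum over $k$. The only blemishes are cosmetic — the intersection poset is $(L_{r_1}\times\cdots\times L_{r_m})\cup\{\hat{1}\}$ rather than $\hat{L}_{r_1}\times\cdots\times\hat{L}_{r_m}$, and under your own indexing the surviving elements sit at the top two ranks of each $L_{r_j}$ rather than at ranks $0$ and $1$ — but since you identify them correctly as $A^{(j)}_{i}$ and $A^{(j)}_{i}\cap A^{(j)}_{i+1}$ with sign $(-1)^{|S|}$, your computation coincides with the paper's.
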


\begin{proof}
Recall that by (\ref{lem1}) $\sigma=213[\sigma_1,1,\sigma_2]\in\fS(312)$ avoids $\pi^{(j)}$ iff $\sigma$ satisfies the condition

$(C^j_i)$: $\sigma_1$ avoids $\underline{\pi^{(j)}}(i)$ and $\sigma_2$ avoids $\overline{\pi^{(j)}}(i)$\\
for some $i\in [r_j]$.
So $\sigma\in\fS(312)$ belongs to $\fS(\Pi)$ if for all $j$, there is an $i\in [r_j]$ for which $\sigma$ satisfies $(C^j_i)$.
Fix $k$ and let $\fS_{n+1}^k(312)$ be as in the proof of (\ref{main1}). Let $A_i^j$ be the set of permutations in $\fS_{n+1}^k(312)$ avoiding $\pi^{(j)}$ and satisfying the condition $(C^j_i)$. For $I=(i_1,...,i_m)\in [r_1]\times [r_2] \times \cdots \times [r_m]$, we define
$$ A_I = A_{i_1,i_2,...,i_m} := A^1_{i_1} \cap A^2_{i_2} \cap A^m_{i_m}. $$ So $\fS^k_{n+1}(\Pi)$ is the union
$$\fS^k_{n+1}(\Pi) = \bigcup_{i_1,...,i_m} A_{i_1,i_2,...,i_m}, $$
where the union is taken over all $m$-tuples $I=(i_1,...,i_m)$ in $[r_1]\times [r_2] \times \cdots \times [r_m]$.
Let $\hat{P}_j$ be the intersection poset of the $A_1^j,...,A^j_{r_j}$, and let $P_j$ be the poset $\hat{P}_j\setminus\{\hat{1}\}$, where $\hat{1}=A_1^j\cup\cdots\cup A^j_{r_j}$ is the unique maximum element of $\hat{P}_j$.
Recall that $P_j$ is isomorphic to $L_{r_j}$.
Let $P$ be the intersection poset of the $A_I$. 
The elements of $P$ are the unique maximal element $A=\fS^k_{n+1}(\Pi)$ and 
$$ T=T^1\cap T^2 \cap\cdots\cap T^m,$$
where each $T^j$ is an element of $P_j$.  Thus $P$ is isomorphic to $L_{r_1}\times\cdots\times L_{r_m}$ with the unique maximum element $\hat{1}$ adjoined. 
For $S\subseteq [n]$, we say that $T\in P$ has type $S$ if
$T^j=A^j_i$ for some $i$ when $j\notin S$ and $T^j=A^j_i\cap A^j_{i+1}$ for some $i$ when $j\in S$. 
Using the previous lemma, we see that the M\"obius function on $P$ for $T=T^1\cap T^2 \cap \cdots\cap T^m \neq \hat{1} $ is 
$$ \mu(T,A) = 
\begin{cases}
 (-1)^{|S|+1}, &\text{ if } T \text{ has type } S,\\
 0, & \text{ otherwise}.
\end{cases}
$$

\noindent For $T\in P$, we define $g:P\rightarrow \bC(x: x\in A)$ by
$ g(T)=\sum_{x\in T} x,$ so that
\begin{align*}
g(A) &= -\sum_{T < A} \mu(T,A) g(T)\\
&= \sum_{S\subseteq [n]} (-1)^{|S|} \sum_{T \text{ has type } S} g(T)
\end{align*}
by the M\"obius inversion formula. 
Now, by definition of type $S$, we have
$$ \sum_{T \text{ has type } S} g(T) = \sum_{\substack{i_1,...,i_m:\\ 1\leq i_j\leq r_j-\delta_j}} g\left(\bigcap_{j\notin S} A^j_{i_j}  \ \cap \bigcap_{j\in S} (A^j_{i_j}\cap A^j_{i_j+1})\right), $$
where $\delta_j=1$ if $j\in S$ and $0$ if $j\notin S$. 
Recall that $\sigma\in A^j_{i_j}$ iff $\sigma_1$ avoids $\underline{\pi}^{(j)}(i_j)$ and $\sigma_2$ avoids $\pi^{(j)}(i_j)$, and $\sigma\in A^j_{i_j}\cap A^j_{i_j+1}$ iff $\sigma_1$ avoids $\underline{\pi}^{(j)}(i_j)$ and $\sigma_2$ avoids $\pi^{(j)}(i_j+1)$. Therefore, by mapping $\sigma\mapsto q^{\stat(\sigma)}$, we have

\begin{align*} 
g\left(\bigcap_{j\notin S} A^j_i  \ \cap \bigcap_{j\in S} (A^j_i\cap A^j_{i+1})\right) \mapsto\ & q^{f(k,n-k)} F^\stat_k(312,\underline{\pi^{(1)}}(i_1),...,\underline{\pi^{(m)}}(i_m))\\  
&\cdot F^\stat_{n-k}(312,\overline{\pi^{(1)}}(i_1+\delta_1),...,\overline{\pi^{(m)}}(i_m+\delta_m)).
\end{align*} 
Thus 
\[
F^{\stat}_{n+1,k}(\Pi;q)
 = q^{f(k,n-k)} \Bigg[ \sum_{S\subseteq [m]} (-1)^{|S|}  \sum_{\substack{i_1,...,i_m:\\ 1\leq i_j\leq r_j-\delta_j}} F^\stat_k(\underline{\Pi_I})\cdot F^\stat_{n-k}(\overline{\Pi}_{I+\delta}) \Bigg].\]
The theorem then follows by summing $F^{\stat}_{n+1,k}(\Pi;q)$ over $k$ from $1$ to $n$.
\end{proof}

%From the theorem above, we can also extract a generalization of (\ref{cor1}). 

\begin{ex}
Let $\Pi=\{312,\pi^{(1)},\pi^{(2)}\}$ where $\pi^{(1)}=2314=12[12_*,\epsilon_*]$ and $\pi^{(2)}=2143=12[1_*,1_*]$. We want to compute $a_n= F^\inv_n(\Pi)$ by using the theorem \ref{main2}. There are four possibilities of $S\subseteq\{1,2\}$, and for each possibility the following table shows the appearing terms.

\begin{center}
\begin{tabular}{ l l l }
$S=\emptyset$: 
& $F^\inv_k(312,12,1)\cdot F^\inv_{n-k}(\Pi)$ & $=\delta_{0,k}\cdot a_{n-k}$ \\
& $F^\inv_k(312,2314,1)\cdot F^\inv_{n-k}(312,1,2143)$ & $=\delta_{0,k}\cdot\delta_{0,n-k}$ \\
& $F^\inv_k(312,12,2143)\cdot F^\inv_{n-k}(312,2314,21)$ & =1 \\
& $F^\inv_k(\Pi)\cdot F^\inv_{n-k}(312,1,21)$ & $=\delta_{0,n-k}\cdot a_k$\\
$S=\{1\}$:
& $F^\inv_k(312,12,1)\cdot F^\inv_{n-k}(312,1,2143)$ &= $\delta_{0,k}\cdot\delta_{0,n-k}$\\
& $F^\inv_k(312,12,2143)\cdot F^\inv_{n-k}(312,1,21)$ &= $\delta_{0,n-k}$\\
$S=\{2\}$:
& $F^\inv_k(312,12,1)\cdot F^\inv_{n-k}(312,2314,21)$ &= $\delta_{0,k}$\\
& $F^\inv_k(312,2314,1)\cdot F^\inv_{n-k}(312,1,21)$ &= $\delta_{0,k}\cdot\delta_{0,n-k}$\\
$S=\{1,2\}$:
& $F^\inv_k(312,21,1)\cdot F^\inv_{n-k}(312,1,21)$ &= $\delta_{0,k}\cdot\delta_{0,n-k}$
\end{tabular}
\end{center}

\noindent Here $\delta$ is the Kronecker delta function. Thus
\begin{align*}
a_{n+1} &= \sum_{q=0}^n q^k\left[ \delta_{0,k} a_{n-k} + \delta_{0,n-k}\cdot a_k + 1 - \delta_{0,k} - \delta_{0,n-k} \right]\\
&= (1+q^n)a_n+\frac{1-q^{n+1}}{1-q} - (1+q^n)\\
&= (1+q^n)a_n+q\left(\frac{1-q^{n-1}}{1-q}\right).
\end{align*}
In particular, by setting $q=1$ we get $a_{n+1}=2a_n+n-1$ with $a_0=a_1=1$. Thus 
$$|\fS_n(312,2314,2143)| = 2^n-n.$$ 
\end{ex}

The following corollary of the theorem \ref{main2} is a generalization of the corollary \ref{cor1}.
It can be proved using a similar argument to that of \ref{cor1}, so we will omit the proof.

\begin{cor}{\label{cor2}}
Let $\pi^{(j)}_i,\pi'^{(j)}_i, 1\leq j\leq m, 1\leq i\leq r_m,$ be permutations such that 
$$\{312,\pi^{(1)}_{i_1},...,\pi^{(m)}_{i_m}\} \stackrel{\stat}{\equiv} \{312,\pi'^{(1)}_{i_1},...,\pi'^{(m)}_{i_m}\}$$ 
for all $m$-tuples $I=(i_1,...,i_m)\in [r_1]\times...\times [r_m]$. Set $\pi^{(j)}=\iota_r[\pi^{(j)}_{1*},...,\pi^{(j)}_{r_j*}]$ and $\pi'^{(j)}=\iota_r[\pi'^{(j)}_{1*},...,\pi'^{(j)}_{r_j*}]$. Then $\Pi=\{312,\pi^{(1)},...,\pi^{(m)}\}$ and $\Pi'=\{312,\pi'^{(1)},...,\pi'^{(m)}\}$ are $\stat$-Wilf equivalent.
\end{cor}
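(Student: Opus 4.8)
The plan is to mirror the inductive argument used for Corollary~\ref{cor1}, but carried out on the more elaborate recursion of Theorem~\ref{main2}. First I would fix the hypothesis: for every $m$-tuple $I=(i_1,\dots,i_m)$ we are given a $\stat$-Wilf equivalence $\{312,\pi^{(1)}_{i_1},\dots,\pi^{(m)}_{i_m}\}\stackrel{\stat}{\equiv}\{312,\pi'^{(1)}_{i_1},\dots,\pi'^{(m)}_{i_m}\}$, and I want to conclude $F^\stat_n(\Pi)=F^\stat_n(\Pi')$ for all $n$, where $\Pi,\Pi'$ are the inflations. I would proceed by induction on $n$, the base case $n=0$ being immediate from the stated value $F^\stat_0(\Pi)=F^\stat_0(\Pi')$ (both are $0$ exactly when some block is $\epsilon$, and the block structure is the same for $\Pi$ and $\Pi'$).

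The key step is to show that each polynomial appearing on the right-hand side of the recursion for $\Pi$ matches the corresponding one for $\Pi'$, at the indices $k\le n$. Concretely, for $0\le k\le n$ and any admissible tuple $I$ with $1\le i_j\le r_j-\delta_j$, I need
$$F^\stat_k(\underline{\Pi}_I)=F^\stat_k(\underline{\Pi'}_I)\qquad\text{and}\qquad F^\stat_{n-k}(\overline{\Pi}_{I+\delta})=F^\stat_{n-k}(\overline{\Pi'}_{I+\delta}).$$
Here I would unwind the definitions: $\underline{\Pi}_I=\{312,\underline{\pi^{(1)}}(i_1),\dots,\underline{\pi^{(m)}}(i_m)\}$, and $\underline{\pi^{(j)}}(i_j)=\iota_{i_j}[(\pi^{(j)}_1)_*,\dots,(\pi^{(j)}_{i_j})_*]$ is itself an inflation of the first $i_j$ blocks of $\pi^{(j)}$. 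The point is that $\underline{\Pi}_I$ is again a set of the form $\{312\}$ together with inflations $\iota_{\cdot}[(\cdot)_*,\dots]$, whose underlying blocks are a \emph{subcollection} of the original blocks $\{\pi^{(j)}_i\}$. By the hypothesis, for every tuple of blocks drawn from this subcollection the corresponding small pattern sets are $\stat$-Wilf equivalent; so $\underline{\Pi}_I$ and $\underline{\Pi'}_I$ satisfy exactly the hypotheses of the very corollary being proved, but now built out of fewer blocks (the ranks $i_j\le r_j$ are smaller, or equal). Strictly, I would set up a secondary induction (or a single induction on $n$ with the statement phrased for all choices of truncations simultaneously): since $k,n-k\le n$, the inductive hypothesis on $n$ already gives $F^\stat_k(\underline{\Pi}_I)=F^\stat_k(\underline{\Pi'}_I)$ and likewise for the $\overline{\cdot}$ terms, because those are $F^\stat$-values of block-inflation pattern sets evaluated at arguments $<n+1$. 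Substituting these equalities termwise into the recursion $(*)$-style formula of Theorem~\ref{main2} — the coefficients $q^{f(k,n-k)}$, the signs $(-1)^{|S|}$, and the index sets over $S$ and $I$ are identical for $\Pi$ and $\Pi'$ — yields $F^\stat_{n+1}(\Pi)=F^\stat_{n+1}(\Pi')$, completing the induction.

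The main obstacle, and the one point that needs care beyond bookkeeping, is making sure the induction is logically well-founded: the truncations $\underline{\pi^{(j)}}(i_j)$ and $\overline{\pi^{(j)}}(i_j+\delta_j)$ are genuinely smaller objects (either fewer blocks, or smaller blocks because the last block of $\underline{\pi^{(j)}}(i_j)$ is $\pi^{(j)}_{i_j}$ rather than $\pi^{(j)}_{r_j}$), so that the needed equalities $F^\stat_k(\underline{\Pi}_I)=F^\stat_k(\underline{\Pi'}_I)$ are instances of the statement at a parameter $\le n$ and not a circular appeal to the case $n+1$. I would phrase the inductive statement as: \emph{for all $N$, and for all collections of blocks $\{\pi^{(j)}_i\},\{\pi'^{(j)}_i\}$ satisfying the tuple-wise $\stat$-Wilf hypothesis, one has $F^\stat_N(\Pi)=F^\stat_N(\Pi')$}, and note that all pattern sets arising on the right-hand side of the recursion for parameter $N+1$ are of this form with parameter $\le N$. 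One also checks the degenerate conventions (blocks equal to $\epsilon$, and the rank-$0$ case $r_j=1$ where $\underline{\pi^{(j)}}(1)=\pi^{(j)}_1$ and $\overline{\pi^{(j)}}(1)=\pi^{(j)}$) are handled uniformly, but these are routine. Since this is essentially the argument of Corollary~\ref{cor1} with an extra layer of indexing, I would present it briefly and, as the authors suggest, omit the most mechanical parts.
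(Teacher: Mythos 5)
Your overall strategy --- induction on $n$, with the statement quantified over all block collections satisfying the tuple-wise hypothesis, and termwise matching of the right-hand side of Theorem~\ref{main2} --- is exactly the argument the paper intends (it omits the proof, pointing to Corollary~\ref{cor1}). However, as written your proof has a genuine gap at the factors $F^\stat_k(\underline{\Pi}_I)$ for which some coordinates have $i_j=1$ while others have $i_{j'}\geq 2$. When $i_j=1$, the pattern $\underline{\pi^{(j)}}(1)=\pi^{(j)}_1$ is a \emph{bare} block, not the starred inflation $\iota_1[(\pi^{(j)}_1)_*]$, so such a set $\underline{\Pi}_I$ mixes bare blocks with block inflations. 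Your inductive statement covers only sets in which every pattern besides $312$ is a starred block inflation drawn from the given collections, while the assumed equivalences cover only the all-bare sets $\{312,\pi^{(1)}_{i_1},\dots,\pi^{(m)}_{i_m}\}$; neither covers the mixed sets. Concretely, take $m=2$, $\pi^{(1)}=\iota_1[\alpha_*]$ and $\pi^{(2)}=\iota_2[\beta_{1*},\beta_{2*}]$: the recursion for $F^\stat_{n+1}(\Pi)$ contains (from $S=\emptyset$, $I=(1,2)$) the factor $F^\stat_k(312,\alpha,\pi^{(2)})$, and the needed equality $F^\stat_k(312,\alpha,\pi^{(2)})=F^\stat_k(312,\alpha',\pi'^{(2)})$ is an instance neither of the inductive hypothesis nor of the assumptions $\{312,\alpha,\beta_i\}\stackrel{\stat}{\equiv}\{312,\alpha',\beta'_i\}$. (Such mixed terms really occur; compare the term $F^\inv_k(312,12,2143)$ in the paper's worked example with $\Pi=\{312,2314,2143\}$, where $12$ is the bare first block of $2314$.) This is precisely the point you dismiss as a ``routine degenerate convention''; in Corollary~\ref{cor1} ($m=1$) it is indeed harmless, because the $i=1$ term is literally the hypothesis, but for $m\geq 2$ with some $r_j\geq 2$ it is where the real work lies.

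Note also that the gap cannot be closed simply by recursing again: applying Theorem~\ref{main2} to a mixed set forces you to decompose the bare block $\pi^{(j)}_1$ into its own sub-blocks, and the hypotheses say nothing relating those sub-blocks to the sub-blocks of $\pi'^{(j)}_1$, which need not even have the same number of blocks, so the two recursions no longer have matching shapes. To make the induction well-founded you need an additional ingredient --- for instance, carry along as part of the inductive statement an equivalence for sets in which, for each $j$, the $j$-th pattern is allowed to be either the bare block $\pi^{(j)}_1$ or one of the truncations $\underline{\pi^{(j)}}(i)$, $\overline{\pi^{(j)}}(i)$, and explain how the bare-block entries are handled when the recursion is applied to them --- or else strengthen the hypothesis you induct on. As it stands, the step ``substituting these equalities termwise'' is not justified for all terms of the formula.
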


%%%%%%%%%%%%%%%%%%%%%%%%%%%%%%%%%%%%%%%%%%%%%%%%%%%%%%%
% Acknowledgement
%%%%%%%%%%%%%%%%%%%%%%%%%%%%%%%%%%%%%%%%%%%%%%%%%%%%%%%
%\noindent {\bf Acknowledgement:}
%The author thanks Professor Bruce Sagan and Professor Alexander Postnikov for providing useful comments.

\newpage

\end{document}